\newtheorem{thm}{Theorem}[section]
\newtheorem*{thm212}{Theorem \ref{interab}}
\newtheorem*{thm328}{Theorem \ref{Main}}
\newtheorem{lem}[thm]{Lemma}
\newtheorem{prop}[thm]{Proposition}
\newtheorem{coro}[thm]{Corollary}
\newtheorem{conj}[thm]{Conjecture}
\theoremstyle{definition}
\newtheorem{defn}[thm]{Definition}
\newtheorem{rem}[thm]{Remark}
\newtheorem{fact}[thm]{Fact}
\newcommand{\trdeg}{\textnormal{trdeg}}
\newcommand{\End}{\textnormal{End}}
\newcommand{\codim}{\textnormal{codim}}
\renewcommand{\exp}{\textnormal{exp}}
\renewcommand{\o}[1]{\overline{ #1 }}
\newcommand{\im}{\textnormal{im}}
\renewcommand{\sl}{\textnormal{SL}}
\newcommand{\gl}{\textnormal{GL}}
\renewcommand{\epsilon}{\varepsilon}
\renewcommand{\phi}{\varphi}
\newcommand{\so}{\textnormal{SO}}
\newcommand{\mat}[4]{ \left(\begin{matrix} #1 & #2 \\ #3 & #4\end{matrix} \right) }
\newcommand{\vect}[2]{ \left(\begin{matrix} #1 \\ #2 \end{matrix} \right) }
\newcommand{\res}{\textnormal{res}}
\renewcommand{\Re}{\textnormal{Re}}
\renewcommand{\Im}{\textnormal{Im}}
\title{Solving Systems of Equations of Raising-to-Powers Type}
\author{Francesco Paolo Gallinaro}
\begin{document}
\maketitle

\begin{abstract}
We address special cases of the analogues of the exponential algebraic closedness conjecture relative to the exponential maps of semiabelian varieties and to the modular $j$ function. In particular, we show that the graph of the exponential of an abelian variety intersects products of rotund varieties in which the subvariety of the domain is a sufficiently generic linear subspace, and that the graph of $j$ intersects products of free broad varieties in which the subvariety of the domain is a M\"obius subvariety.
\end{abstract}

\section{Introduction}

Zilber's study of the model theory of complex exponentiation has originated a lot of work around systems of exponential polynomial equation. The main conjecture, known as \textit{exponential algebraic closedness} and first stated in \cite{Zil05}, predicts sufficient conditions for an algebraic subvariety of $\mathbb{C}^n \times (\mathbb{C}^\times)^n$ to intersect the graph of the exponential function. It is related to Schanuel's conjecture, a famous open problem in transcendental number theory.

\begin{conj}[Schanuel]
Let $z_1,\dots,z_n \in \mathbb{C}^n$ be $\mathbb{Q}$-linearly independent complex numbers. Then $$\trdeg(z_1,\dots,z_n,e^{z_1},\dots,e^{z_n}) \geq n.$$
\end{conj}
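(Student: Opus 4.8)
The plan is to proceed by contradiction using the auxiliary-function machinery of transcendental number theory. Suppose $z_1,\dots,z_n$ are $\mathbb{Q}$-linearly independent but $d := \trdeg(z_1,\dots,z_n,e^{z_1},\dots,e^{z_n}) \leq n-1$, and set $K = \overline{\mathbb{Q}}(z_1,\dots,z_n,e^{z_1},\dots,e^{z_n})$, a field of transcendence degree $d$. The steps I would carry out are: (1) fix large parameters $S,T$ and, by a Siegel-lemma argument, produce a nonzero exponential polynomial $F(w) = \sum_{0 \le k \le d} P_k(w)\, e^{\lambda_k \cdot w}$ — built from $\mathbb{Q}$-linear forms $\lambda_k$ in the $z_i$ and polynomials $P_k$ with $\overline{\mathbb{Q}}$-coefficients of bounded degree and height — vanishing to order $\geq T$ at every lattice point of $\{0,\dots,S\}^n$; (2) run an analytic estimate (a Schwarz/maximum-modulus lemma) together with an arithmetic lower bound on the nonzero values of $F$ to propagate the vanishing to many more points, or to a higher multiplicity; (3) invoke a multiplicity (zero) estimate on the commutative algebraic group $\mathbb{G}_a^n \times \mathbb{G}_m^n$, in the style of Masser and Philippon, to conclude that an $F$ with so many high-order zeros must be identically zero, or that its coefficients lie in the ideal of algebraic relations among the $z_i,e^{z_i}$ — contradicting the nonvanishing from step (1); (4) calibrate $S$, $T$, and the degree bounds against $d$ so that the construction in step (1) succeeds precisely when $d \le n-1$, which is where the hypothesis is used.

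Steps (2)--(4) are, in principle, within reach of now-standard tools: the multiplicity estimates for group varieties are exactly tuned to $\mathbb{G}_a^n \times \mathbb{G}_m^n$, and the parameter bookkeeping is of the type carried out in the classically known special cases — $z_i$ all algebraic (Lindemann--Weierstrass), or $z_i = \log\alpha_i$ with $\alpha_i \in \overline{\mathbb{Q}}^\times$ (Baker). In those settings the construction in step (1) is available because the exponents carry arithmetic structure: the homogeneous linear system whose solvability yields $F$ has coefficients in a number field, so their heights are controlled and Siegel's lemma delivers a small nonzero solution.

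The main obstacle — and the reason the conjecture remains open — is precisely step (1) in the general case. For arbitrary $\mathbb{Q}$-linearly independent complex $z_i$, the coefficients of the linear system defining $F$ lie in the transcendental field $K$, and one has no height information about them at all; Siegel's lemma, which needs a size bound on the coefficients to guarantee a small solution, then produces nothing. In effect, constructing a useful auxiliary function already requires the very Diophantine control over $(z_1,\dots,z_n,e^{z_1},\dots,e^{z_n})$ that Schanuel's conjecture is meant to supply, so the argument is circular at its first step. I do not expect to remove this obstruction here: it seems to demand a genuinely new idea going beyond the auxiliary-function method, which is why the results actually established in the body of the paper address the complementary, geometric half of the circle — the exponential algebraic closedness conjecture — where no such arithmetic input is needed.
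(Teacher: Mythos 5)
The statement you were asked about is Schanuel's Conjecture, which the paper states purely as motivation: it is a famous open problem in transcendental number theory, the paper contains no proof of it, and none is claimed anywhere in the text. So there is no ``paper's own proof'' to compare against, and your proposal is, by your own admission, not a proof either --- it is a sketch of the standard auxiliary-function strategy followed by an accurate diagnosis of why that strategy breaks down. Your diagnosis is essentially correct: in the known cases (Lindemann--Weierstrass, Baker's theorem, the six exponentials theorem, etc.) the Siegel-lemma step works because the relevant quantities are algebraic or have controlled heights, whereas for arbitrary $\mathbb{Q}$-linearly independent complex $z_1,\dots,z_n$ one has no arithmetic control over the field $K$ in which the linear system's coefficients live, so the construction of the auxiliary function is exactly where the argument collapses. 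The gap, then, is the whole of step (1) --- and with it the theorem --- and you correctly refrain from claiming to have closed it.

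One point worth making explicit for the context of this paper: the author invokes Schanuel's conjecture only to motivate its ``dual'' statement, the exponential algebraic closedness conjecture, and the results actually proved (Theorems 2.12 and 3.31) concern existence of intersections with the graphs of $\exp$ of an abelian variety and of $j$, which require no unproved arithmetic input --- the tools are Ratner's theorem, Ax--Schanuel-type functional transcendence (which \emph{is} a theorem, due to Ax, Pila--Tsimerman, Kirby), and complex-analytic arguments. Your closing remark that the geometric half of the picture is the tractable one is exactly the stance the paper takes; just be careful not to conflate the conjectural Schanuel inequality with its functional (differential-field) analogue, which is proved and is what results like Theorem 8.1 of \cite{K06} and \cite{PT} rest on.
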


Schanuel's conjecture predicts that algebraic varieties of low dimension defined over $\mathbb{Q}$ do not intersect the graph of the exponential function, unless they are forced to do so by the fact that $\exp$ is a group homomorphism. In the 1970s, Ax \cite{Ax72, Ax72b} proved a function field version of this conjecture, establishing that the dimension of the irreducible analytic components of the intersections of complex algebraic varieties with the graph of the exponential are governed by $\mathbb{Q}$-linear relations on the domain and by multiplicative relations on the codomain.

The exponential algebraic closedness conjecture can be seen as a dual to Ax's theorem: it predicts that all algebraic varieties which ``should'' intersect the graph of the exponential, based on the $\mathbb{Q}$-linear and multiplicative relations that they satisfy and on certain dimension considerations, do intersect the graph of $\exp$. This is interpreted rigorously through the technical conditions of ``freeness" and ``rotundity", which are expected to be sufficient for an algebraic subvariety of $\mathbb{C}^n \times (\mathbb{C}^\times)^n$ to intersect the graph of $\exp$. For example, the case $n=1$ states that if $p \in \mathbb{C}[X,Y]$ is a polynomial depending on both variables then there are infinitely many points $z \in \mathbb{C}$ such that $p(z, e^z)=0$; as discussed in \cite{Mar}, this can be proved by using standard techniques from complex analysis. Other cases of this conjecture have been addressed in several papers, see \cite{Zil02, Zil11, MZ, BM, DFT, K19}, each one imposing additional geometric restrictions on the varieties involved or considering modified versions of the problem.

We are particularly interested in \cite{Zil02} and \cite{Zil11}. There, Zilber proved a special case of the conjecture for $\exp$, studying intersections between the graph of $\exp$ and varieties of the form $L \times W$, satisfying the freeness and rotundity conditions mentioned above, where $L$ is an $\mathbb{R}$-linear subspace of $\mathbb{C}^n$ and $W$ is an algebraic subvariety of $(\mathbb{C}^{\times})^n$. In particular, \cite{Zil02} finds intersections for this kind of varieties assuming a uniform version of Schanuel's conjecture, while \cite{Zil11} removes the dependence on the conjecture but requires $L$ to be defined over some ``generic" subfield of the reals.

This case is usually referred to as ``raising to powers". The reason for this can be explained with a simple example: suppose $W \subseteq (\mathbb{C}^\times)^2$ is an algebraic curve, defined by an equation of the form $f(Y_1,Y_2)=0$, and $L$ is a linear subspace of $\mathbb{C}^2$, defined by an equation of the form $X_2=\alpha X_1$ for some complex number $\alpha$. Then, to find an intersection between the variety $L \times W$ and the graph of $\exp$, we have to solve the system:

$$\begin{cases}
z_2=\alpha z_1 \\
f(w_1,w_2)=0 \\
z_1=\exp(w_1)\\
z_2=\exp(w_2)\\
\end{cases}$$ which can easily be reduced to the single equation $$f(\exp(z),\exp(\alpha z))=0.$$ If we allow for a multivalued ``raising-to-power-$\alpha$" operator, which maps $w \in \mathbb{C}^\times$ to any value of $\exp(\alpha \log w)$, then this corresponds to finding all solutions of $$f(w,w^\alpha)=0$$ hence justifying the name. Similarly, all systems of this form (in any number of variables) can be rewritten as systems of exponential sums.

In recent years, the geometric nature of the exponential-algebraic closedness problem has led to the formulation of similar questions for other functions with similar properties. One fruitful line of research is the one concerning semiabelian varieties: Kirby in \cite{K09} and Bays and Kirby in \cite{BK18} have shown how to generalize the model theory of the complex exponential function to the exponentials of semiabelian varieties, and in particular formulated a conjecture that is analogous to Zilber's and would have similar model-theoretic consequences. The problem of finding intersections between algebraic varieties and the graph of the exponential of an abelian variety has therefore begun to attract attention.

At the same time, the model theory of other functions of interest in arithmetic geometry has been developed, such as the modular $j$ invariant, especially after a version of Ax's theorem for $j$ was proved in \cite{PT}. This has naturally led to the formulation of a conjecture on solutions of systems of equations which involve the modular $j$ function, and it seems likely, although at the moment there are no results available, that the question might also be asked for other uniformizers of Shimura varieties. Some cases involving the $j$ function have been addressed, often taking inspiration from previous work in the exponential case: for example, \cite{EH} contains an analogue for $j$ of the results contained in \cite{BM} and \cite{DFT}, and \cite{AK} of the results from \cite{K19}. Our results are new, and deal with cases which were not addressed before.

In this paper, drawing inspiration from Zilber's results in \cite{Zil02} and \cite{Zil11}, we solve some special cases of the existential closedness conjectures for exponential maps of abelian varieties and for the $j$ function. In particular, we show that the graphs of these functions intersect algebraic subvarieties which satisfy certain geometric constraints.

In Section 2 we consider abelian varieties. Using the fact that abelian varieties are complex Lie groups, and their subgroups are thus well-understood thanks to Cartan's closed subgroup theorem, we obtain intersections between the graph of the exponential $\exp$ of an abelian variety $A$ and a sufficiently generic product of the form $L \times W$ which satisfies the technical condition of \textit{rotundity}, which will be defined later. The genericity condition is stated in terms of the lattice $\Lambda$ such that $A \cong \mathbb{C}^g/\Lambda$.

\begin{thm212}
Let $A \cong \mathbb{C}^g/\Lambda$ be a complex abelian variety, $LA$ its Lie algebra, $\exp_LA \rightarrow A$ the exponential map, and let $L \times W$ be a rotund subvariety of $LA \times A$, where $L \leq LA$ is a linear subspace that is not contained in any hyperplane defined over the conjugate of the dual of the lattice $\Lambda$. 

Then $\exp(L) \cap W$ is dense in $W$ in the Euclidean topology.
\end{thm212}

The additional assumption on $L$ makes $\exp(L)$ a dense subgroup of $A$ in the Euclidean topology. The exponential-algebraic closedness conjecture for abelian varieties predicts that algebraic subvarieties of the tangent bundle of $A$ which staisfy rotundity and another condition known as \textit{freeness} intersect the graph of the exponential of $A$. Our assumption in $L$ is stronger than freeness; however, it allows us to draw the stronger conclusion that $\exp(L) \cap W$ is Euclidean dense in $W$ and not just Zariski-dense.

In Section 3 we obtain a similar result for $j$; the assumption on subvarieties of the domain is now that they are \textit{M\"obius subvarieties} of $\mathbb{H}^n$ ($\mathbb{H}$ denotes the complex upper half plane.) These are intersections with $\mathbb{H}^n$  of complex algebraic varieties which are defined by using the action of the group $\sl_2(\mathbb{R})$ on $\mathbb{H}$ by M\"obius transformations: a matrix $g=\mat{a}{b}{c}{d}$ acts on $z$ by taking it to $gz=\frac{az+b}{cz+d}$, and a M\"obius variety is a variety that can be defined by only using conditions of the form $z_i=c$ (constant coordinates) and $z_j=gz_i$. The geometry of M\"obius transformations, and other general facts about the $j$ function, will be reviewed at the beginning of Section 3. The statement relies on the technical conditions of \textit{freeness} and \textit{broadness}.

\begin{thm328}
Let $L \times W$ be a free broad algebraic subvariety of $\mathbb {H}^n  \times \mathbb{C}^n$ such that $L \subseteq \mathbb{H}^n$ is a M\"obius subvariety. Then $W$ contains a dense subset of points of $j(L)$.
\end{thm328}

The link between Theorem \ref{Main} and Zilber's raising to powers should be explained. Consider a M\"obius subvariety $L$ of $\mathbb{H}^2$, defined by the equation $X_2=gX_1$ for a matrix $g \in \sl_2(\mathbb{R})$, and an algebraic subvariety $W \subseteq \mathbb{C}^2$, defined by a polynomial $f(Y_1,Y_2)$. As in the exponential case, it is easy to see how intersecting the variety $L \times W$ with the graph of $j$ is equivalent to solving the single equation $f(j(z),j(gz))=0$. Therefore, if we define for $g$ a multivalued operator $w \mapsto w^g$ on $\mathbb{C}$ which maps $w$ to any value of $gj^{-1}(z)$ (analogously to what we did for $\exp$), we can see the equation above as $f(w,w^g)=0$. In this sense, this kind of systems of equations represent an analogue of the exponential raising to powers for the $j$ function.

As the $j$ function is algebraically independent from its first two derivatives, results in this area often take into consideration equations which involve $j, j'$ and $j''$ (see for example \cite{AK}). The methods of this paper do not seem to work if we consider all three functions together, and only allow us to work with one function at a time, yielding only some partial results. This is explained in Section 4, where we sketch the proof of a partial result that can be obtained for $j'$.

While the motivation for these questions comes from model theory, the methods that we use are mainly geometric, ergodic- and number-theoretic. Hence, we have tried to make this exposition as self-contained as possible.

\textbf{Acknowledgements.} The author would like to thank his supervisor, Vincenzo Mantova, for suggesting to work on this topic, for his countless suggestions, and for the valuable feedback that he has provided. Thanks to Gareth Jones, whose suggestion to use Ratner's theorem to tackle Proposition \ref{dense} proved correct, and to Vahagn Aslanyan and Jonathan Kirby for many interesting conversations around these topics. Many thanks to the referee for their many suggestions on the manuscripit and how to improve it. This research was done as part of the author's Ph.D. project, supported by a scholarship from the School of Mathematics of the University of Leeds.

\section{Abelian Varieties}

\subsection{Background and Notation}

In this section we will deal with complex abelian varieties. Throughout, $A$ will be used to denote an abelian variety, which will be identified with the set $A(\mathbb{C})$ of its complex points. 

The Lie algebra of $A$ (its tangent space at identity) will be denoted $LA$, and the tangent bundle will be identified with $LA \times A$ and denoted $TA$. These will also be identified with the sets $LA(\mathbb{C})$ and $TA(\mathbb{C})$ of their complex points. Given an abelian subvariety $B$ of $A$, we will denote by $LB$ its Lie algebra (identified with a subspace of $LA$) and by $TB$ its tangent bundle, identified with $LB \times B$ as a subset of $TA$. We note that the quotient $A/B$ is also an abelian variety, with Lie algebra isomorphic to $LA/LB$ and tangent bundle isomorphic to $TA/TB$.

There is an analytic covering map $\exp: LA\rightarrow A$, known as the \textit{exponential map} of $A$. $LA$ is a complex vector space of the same dimension $g$ as the abelian variety, and as such it is isomorphic to $\mathbb{C}^g$; the exponential map is used to identify $A$ with the quotient $\mathbb{C}^g/\Lambda$, where $\Lambda$ is a lattice in $\mathbb{C}^g$. 

We will use $+$ and $-$ to denote the operations on the Lie algebra, while $\oplus$ and $\ominus$ will denote the operations on both $A$ and the tangent bundle $TA$ (it will be clear from context which one we will be referring to.)

Unless explicitly stated otherwise, all algebraic varieties considered in this section will be irreducible.

\subsection{Dense Subgroups of Abelian Varieties}

In this section we describe the linear subspaces of $LA$ whose image under the exponential function is dense in $A$ in the Euclidean topology, showing that this property holds for a generic subspace. This will allow us to apply a similar argument to the one used in \autocite[Section 6]{K19}. This characterization is not new as it appears implicitly for example in \autocite[Section 3]{UY18}, but we work out an explicit statement.

We endow $\mathbb{C}^g$ with the usual Hermitian product, defined for $z=(z_1,\dots, z_g)$ and $w=(w_1,\dots,w_g)$ as $z \cdot w=z_1\o{w}_1 + \dots + z_g\o{w}_g$. Consider, for complex vectors $z,z'$, $\langle z,z' \rangle:=\Re(z \cdot z')$.

\begin{rem}\label{scalar}
	Identifying $z=(z_1,\dots,z_g)=(x_1+iy_1,\dots,x_g+iy_g)$ with the real vector $(x_1,y_1,\dots,x_g,y_g) \in \mathbb{R}^{2g}$, we have that $\langle z, z' \rangle$ coincides with the real scalar product in $\mathbb{R}^{2g}$, as $$\Re((x_1+iy_1,\dots,x_g+iy_g) \cdot (x_1'+iy_1',\dots,x_g'+iy_g'))=$$ $$=x_1x_1'+y_1y_1'+\dots +x_gx_g'+y_gy_g'.$$
\end{rem} 

\begin{defn}
	Let $\Lambda$ be a lattice of rank $2g$ in $\mathbb{C}^g$. The \textit{dual lattice} $\Lambda^*$ is the lattice defined by $$\Lambda^*:=\{\theta \in \mathbb{C}^g|\langle \theta, \lambda \rangle \in \mathbb{Z} \, \, \forall \lambda \in \Lambda \}.$$
\end{defn}

We use the dual lattice to describe the real hyperplanes of $\mathbb{C}^g$ whose image in $\mathbb{C}^g/\Lambda$ is closed.

\begin{lem}[See {\autocite[Section 3]{UY18}}]
	Let $H$ be a real hyperplane in $\mathbb{C}^g$. Then $H + \Lambda$ is closed in $\mathbb{C}^g/\Lambda$ if and only if $H$ can be defined by an equation of the form $\Re\left(\sum_{i=1}^{g} \o{\theta}_i z_i \right)=0$ for $\theta=(\theta_1,\dots,\theta_g) \in \Lambda^*$.
\end{lem}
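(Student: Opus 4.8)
The plan is to work with the real-linear structure on $\mathbb{C}^g \cong \mathbb{R}^{2g}$ and translate the topological condition ``$H+\Lambda$ closed in $\mathbb{C}^g/\Lambda$'' into an arithmetic condition on the normal vector of $H$. First I would fix a real-linear functional $\psi\colon\mathbb{R}^{2g}\to\mathbb{R}$ whose kernel is $H$; by Remark \ref{scalar} every such functional can be written as $\psi(z)=\langle\vartheta,z\rangle=\Re\!\left(\sum_{i}\o{\vartheta}_i z_i\right)$ for a unique $\vartheta\in\mathbb{C}^g$, so the hyperplane $H$ is $\{z:\langle\vartheta,z\rangle=0\}$ and the problem becomes: $H+\Lambda$ is closed in $\mathbb{C}^g/\Lambda$ iff $\vartheta\in\mathbb{Q}\cdot\Lambda^*$, which after rescaling $\vartheta$ (this does not change $H$) is the same as $\vartheta\in\Lambda^*$. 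The key structural fact to invoke is that the quotient group $G=(H+\Lambda)/\Lambda$ is a subgroup of the compact torus $\mathbb{C}^g/\Lambda$, and a connected subgroup of a torus is closed precisely when it is itself a subtorus, i.e. when $(H+\Lambda)/\Lambda$ is compact.

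The main step is then to analyze when $\pi(H)$ is closed, where $\pi\colon\mathbb{C}^g\to\mathbb{C}^g/\Lambda$ is the projection. Let $V=H\cap\mathrm{span}_{\mathbb{R}}(H\cap\Lambda)$ be the largest rational subspace of $H$ relative to $\Lambda$. The closure $\overline{\pi(H)}$ equals $\pi(W)$ where $W=H+\mathrm{span}_{\mathbb{R}}(H\cap\Lambda)^{\perp\perp}$... more precisely, the closure of $H+\Lambda$ in $\mathbb{C}^g$ is $H'+\Lambda$ where $H'$ is the smallest real subspace containing $H$ that is spanned by its intersection with $\Lambda$ (equivalently $H'=H+W_0$ with $W_0$ the real span of $\Lambda\cap H^{\mathrm{something}}$ — I would make this precise using the standard fact that for a subspace $U$ of $\mathbb{R}^{2g}$, $\overline{U+\Lambda}=U'+\Lambda$ with $U'$ the saturation of $U$). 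Since $H$ is a hyperplane, $H'$ is either $H$ itself or all of $\mathbb{C}^g$; so $H+\Lambda$ is closed iff $H'=H$ iff $H$ is spanned by $H\cap\Lambda$, i.e. $H\cap\Lambda$ has rank $2g-1$. I would then show this rank condition is equivalent to the normal direction $\vartheta$ being (a rational multiple of) an element of $\Lambda^*$: if $H\cap\Lambda$ spans $H$, pick a basis $\lambda_1,\dots,\lambda_{2g-1}$ of it and extend to a basis $\lambda_1,\dots,\lambda_{2g}$ of $\Lambda$; the dual basis element $\vartheta_{2g}\in\Lambda^*$ vanishes on $\lambda_1,\dots,\lambda_{2g-1}$ hence cuts out exactly $H$. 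Conversely if $H=\ker\langle\vartheta,-\rangle$ with $\vartheta\in\Lambda^*$, then $H\cap\Lambda=\ker(\langle\vartheta,-\rangle|_\Lambda)$ is the kernel of a homomorphism $\Lambda\to\mathbb{Z}$, hence a direct summand of corank $\leq 1$, so it has rank $\geq 2g-1$ and spans $H$.

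The care points, and the place I expect to spend the most effort, are: (1) pinning down precisely the ``saturation'' statement $\overline{U+\Lambda}=U'+\Lambda$ — this is the one genuinely topological ingredient and needs Kronecker-type density of a dense line/plane in a subtorus, applied inside the quotient torus $\mathbb{C}^g/(\text{span of }\Lambda\cap H)$; (2) the bookkeeping between ``$\vartheta\in\Lambda^*$'' and ``$\vartheta\in\mathbb{Q}\cdot\Lambda^*$,'' since the lemma as stated allows any $\vartheta\in\Lambda^*$ but a hyperplane only determines $\vartheta$ up to a nonzero real scalar, so one must note that if $H$ is definable by some real $\vartheta$ that is a rational multiple of a lattice-dual vector then, clearing denominators, it is definable by an honest element of $\Lambda^*$; and (3) making sure the identification in Remark \ref{scalar} is used consistently so that ``defined over $\Lambda^*$'' via the Hermitian pairing $\Re(\sum\o{\theta}_i z_i)$ matches ``normal vector lies in $\Lambda^*$'' via the real pairing $\langle\,,\rangle$. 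Everything else is linear algebra over $\mathbb{Z}$ (a subgroup of $\Lambda$ is a direct summand iff the quotient is torsion-free). I would structure the write-up as: reduce to the rank-of-$H\cap\Lambda$ condition via the saturation lemma; prove the two implications between that rank condition and $\vartheta\in\Lambda^*$; conclude.
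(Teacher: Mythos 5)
Your proposal is correct and takes essentially the same route as the paper: reduce closedness of $H+\Lambda$ to the condition that $H\cap\Lambda$ spans $H$ (the rationality criterion the paper cites as well known for $\mathbb{R}^{2g}/\mathbb{Z}^{2g}$), and then translate that rank-$(2g-1)$ condition into the normal vector lying in $\Lambda^*$ via Remark \ref{scalar}. Your dual-basis argument for the saturated sublattice $H\cap\Lambda$ and the kernel-of-an-integer-valued-functional argument for the converse just make explicit the orthogonality assertions the paper leaves unproved, and the rescaling bookkeeping between $\Lambda^*$ and $\mathbb{Q}\cdot\Lambda^*$ is a harmless refinement.
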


\begin{proof}
	As a real torus, $\mathbb{C}^g/\Lambda$ is isomorphic to $\mathbb{R}^{2g}/\mathbb{Z}^{2g}$. It is well-known that the hyperplanes $H$ of $\mathbb{R}^{2g}$ which have closed image in the quotient are those defined by a $\mathbb{Q}$-linear equation, as these have a set of generators in $\mathbb{Z}^{2g}$; in other words, $H \cap \mathbb{Z}^{2g}$ is a lattice in $H$. Hence, in the setting of $\mathbb{C}^g /\Lambda$ we must characterize the spaces $H$ for which $H \cap \Lambda$ is a lattice in $H$.
	
	So suppose $H \cap \Lambda$ is a lattice in $H$. Then $H$ contains $2g-1$ linearly independent elements of $\Lambda$, and there is an element of $\Lambda^*$ which is orthogonal to each of those for the real scalar product. Then by Remark \ref{scalar} we have the claim.
	
	Conversely, suppose $H$ is defined by an equation of the form above. Then $H$ is orthogonal to a space defined by an element of the dual, and so it is generated by points in the lattice.
\end{proof}

This property easily translates to complex hyperplanes.

\begin{coro}
	Suppose $L$ is a complex hyperplane of $\mathbb{C}^g$ which cannot be defined by an equation of the form $\sum_{i=1}^{g} \o{\theta}_i z_i =0$ for $\theta=(\theta_1,\dots,\theta_g) \in \Lambda^*$ (we say such an hyperplane is not defined over the conjugate of the dual lattice). Then $L+ \Lambda$ is dense in $\mathbb{C}^g/\Lambda$.
\end{coro}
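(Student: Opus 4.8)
The plan is to obtain this as a consequence of the previous Lemma, the only extra input being a short observation that uses that $L$ is a \emph{complex} (not merely real) linear subspace. Suppose for contradiction that $L+\Lambda$ is not dense in $\mathbb{C}^g$. Then $\overline{L+\Lambda}$ is a proper closed subgroup of $\mathbb{C}^g$ containing the cocompact lattice $\Lambda$, so its image in the real torus $\mathbb{C}^g/\Lambda$ is a proper closed subgroup; by the standard description of closed subgroups of a torus (their identity components are subtori, corresponding to $\Lambda$-rational real subspaces) one extracts a proper $\Lambda$-rational real subspace $V\supseteq L$, which may be enlarged to a $\Lambda$-rational real hyperplane $H\supseteq L$. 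Since $H+\Lambda$ is then closed in $\mathbb{C}^g/\Lambda$, the Lemma tells us that $H$ is defined by an equation $\Re\!\big(\sum_{i=1}^{g}\overline{\theta}_i z_i\big)=0$ for some $\theta=(\theta_1,\dots,\theta_g)\in\Lambda^*$, with $\theta\neq 0$ because $H\neq\mathbb{C}^g$.

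It remains to upgrade the inclusion $L\subseteq H$ to an inclusion into a complex hyperplane. Set $\psi(z):=\sum_{i=1}^{g}\overline{\theta}_i z_i$; this is a nonzero $\mathbb{C}$-linear functional, and $L\subseteq H$ says $\Re(\psi(z))=0$ for all $z\in L$. As $L$ is a complex subspace, $z\in L$ implies $iz\in L$, so $0=\Re(\psi(iz))=\Re(i\psi(z))=-\Im(\psi(z))$; together with $\Re(\psi(z))=0$ this forces $\psi(z)=0$. Hence $L\subseteq\ker\psi=\{z:\sum_{i=1}^{g}\overline{\theta}_i z_i=0\}$, and since both sides are complex hyperplanes of $\mathbb{C}^g$ they coincide. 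But this exhibits $L$ as a hyperplane defined over the conjugate of the dual lattice, contrary to hypothesis; so $L+\Lambda$ must be dense.

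The genuinely content-bearing step is the last one — the observation that a complex hyperplane sitting inside the real hyperplane $\{\Re\psi=0\}$ is automatically contained in $\{\psi=0\}$ — and it is elementary. I expect the only mildly delicate point to be the first paragraph: checking that non-density of $L+\Lambda$ forces $L$ into a $\Lambda$-rational real hyperplane, i.e. one of the hyperplanes classified by the Lemma. This is routine from the structure theory of closed subgroups of $\mathbb{R}^{2g}$ (or, equivalently, from Pontryagin duality for $\mathbb{C}^g/\Lambda$, whose character group is identified with $\Lambda^*$ via Remark~\ref{scalar}), and presents no real difficulty.
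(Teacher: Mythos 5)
Your proof is correct and follows essentially the same route as the paper: the paper's own argument is exactly your second paragraph (a complex subspace inside $\{\Re(\sum_i \overline{\theta}_i z_i)=0\}$ must also satisfy $\Im(\sum_i \overline{\theta}_i z_i)=0$, hence lies in the complex hyperplane $\{\sum_i \overline{\theta}_i z_i=0\}$), with the reduction of non-density to containment in a $\Lambda$-rational real hyperplane covered by the preceding Lemma left implicit. Your first paragraph simply spells out that implicit step, and does so correctly.
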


\begin{proof}
	If a complex linear space satisfies $\Re\left(\sum_{i=1}^{g} \o{\theta}_i z_i \right)=0$ for coefficients in $\Lambda^*$, then it must satisfy $\Im\left(\sum_{i=1}^{g} \o{\theta}_i z_i \right)=0$, and hence the equation $\sum_{i=1}^{g} \o{\theta}_i z_i =0$. 
\end{proof}

The next lemma easily follows.

\begin{lem}\label{hyperdensecomplex}
Let $L$ be a complex linear subspace of $\mathbb{C}^g$. Then $L + \Lambda$ is dense in $\mathbb{C}^g/\Lambda$ if and only if $L$ is not contained in a complex hyperplane defined over the conjugate of $\Lambda^*$. 
\end{lem}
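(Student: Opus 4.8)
The plan is to reduce the statement about a general complex linear subspace $L$ to the hyperplane case already settled by the preceding corollary. The key observation is that $L + \Lambda$ fails to be dense in $\mathbb{C}^g$ precisely when its closure is a proper closed subgroup of $\mathbb{C}^g$; since $L$ is a linear subspace and $\Lambda$ is a lattice, the closure $\overline{L + \Lambda}$ is a closed subgroup of $\mathbb{C}^g$ containing the real vector space generated by $L$. If it is proper, it is contained in some real hyperplane, so there is a nonzero real-linear functional vanishing on $\overline{L+\Lambda}$, hence on both $L$ and $\Lambda$.

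First I would make the "only if" direction explicit. Suppose $L + \Lambda$ is not dense. Then there is a proper closed subgroup $G$ of $\mathbb{C}^g$ containing $L + \Lambda$, and in particular a real hyperplane $H \supseteq L + \Lambda$. Since $H$ contains the lattice $\Lambda$ (which spans $\mathbb{C}^g$ over $\mathbb{R}$), $H \cap \Lambda$ is a full-rank sublattice of $H$, so $H + \Lambda = H$ is closed in $\mathbb{C}^g/\Lambda$; by the lemma "See Section 3 of \cite{UY18}" above, $H$ is defined by an equation $\Re\left(\sum_{i=1}^{g} \o{\theta}_i z_i\right)=0$ with $\theta \in \Lambda^*$. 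Now $L \subseteq H$ is a complex linear subspace contained in the real hyperplane $H$; as in the proof of the corollary, a complex subspace satisfying the real equation $\Re\left(\sum_{i=1}^g \o{\theta}_i z_i\right)=0$ also satisfies the imaginary part (apply the real equation to $iz$ for $z \in L$) and hence the full complex equation $\sum_{i=1}^g \o{\theta}_i z_i = 0$. Thus $L$ is contained in a complex hyperplane defined over the conjugate of $\Lambda^*$.

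For the converse, suppose $L$ is contained in a complex hyperplane $L'$ defined over the conjugate of $\Lambda^*$, say by $\sum_{i=1}^g \o{\theta}_i z_i = 0$ with $\theta \in \Lambda^*$. Then $L + \Lambda \subseteq L' + \Lambda$, and by the corollary (contrapositive) $L' + \Lambda$ is not dense in $\mathbb{C}^g$ — indeed its closure lies in the real hyperplane $\Re\left(\sum_{i=1}^g \o{\theta}_i z_i\right)=0$, which is closed modulo $\Lambda$. Hence $L + \Lambda$ is not dense either.

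Since all the analytic content (closed subgroups of a torus, the characterization of closed-image real hyperplanes) has already been established, there is no serious obstacle; the only point requiring a little care is the first reduction — observing that a non-dense $L + \Lambda$ must actually be trapped inside a \emph{real hyperplane}, not merely a proper closed subgroup. This follows because any proper closed subgroup of $\mathbb{C}^g$ containing the $\mathbb{R}$-span of $\Lambda$ (which is all of $\mathbb{C}^g$) would be improper unless it is contained in a proper $\mathbb{R}$-subspace, hence in a real hyperplane. One should also double-check that a complex subspace $L$ contained in no complex hyperplane over $\o{\Lambda^*}$ really has the $\mathbb{R}$-span needed, but this is immediate from dimension counting together with the corollary.
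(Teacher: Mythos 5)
Your overall plan (reduce to the rational-hyperplane case via the closure of $L+\Lambda$, then complexify the real equation as in the corollary) is the right one, and your converse direction is essentially sound once reworded: the correct containment is $L+\Lambda \subseteq H+\Lambda$ where $H$ is the real hyperplane $\Re\left(\sum_{i=1}^g \o{\theta}_i z_i\right)=0$, and $H+\Lambda$ is closed (by the lemma on real hyperplanes) and proper, so $L+\Lambda$ is not dense. (Note that citing the ``contrapositive'' of the corollary does not help here: its contrapositive runs in the other direction; the direct argument you sketch is the one doing the work. Also, the closure does not lie in the real hyperplane itself, only in the hyperplane plus $\Lambda$.)

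The genuine gap is in your ``only if'' direction. You claim that if $L+\Lambda$ is not dense then there is a real hyperplane $H$ with $H \supseteq L+\Lambda$. This is impossible: $\Lambda$ has rank $2g$, so it spans $\mathbb{C}^g$ over $\mathbb{R}$ and is contained in no proper real subspace; accordingly the sentence ``$H$ contains the lattice $\Lambda$ \dots so $H\cap\Lambda$ is a full-rank sublattice of $H$, so $H+\Lambda=H$'' is self-contradictory (a rank-$2g$ lattice cannot sit inside a $(2g-1)$-dimensional space), and your closing remark that a proper closed subgroup containing the $\mathbb{R}$-span of $\Lambda$ must lie in a real hyperplane fails for the same reason --- $\Lambda$ itself is a proper closed subgroup contained in no hyperplane. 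What is true, and what the argument needs, is a statement about the \emph{linear part} of the closure rather than about $L+\Lambda$ itself: the image of $L$ in $\mathbb{C}^g/\Lambda$ is a connected subgroup, so its closure is a subtorus, and hence $\overline{L+\Lambda}=V+\Lambda$ for a $\Lambda$-rational real subspace $V$ containing $L$. If $L+\Lambda$ is not dense then $V$ is proper, and since $V$ is rational it is contained in a rational real hyperplane $H$, i.e.\ one defined by $\Re\left(\sum_{i=1}^g \o{\theta}_i z_i\right)=0$ with $\theta \in \Lambda^*$ (this is where the earlier lemma enters). Only now does your complexification step apply: $L \subseteq H$ and $L$ complex force $\sum_{i=1}^g \o{\theta}_i z_i = 0$ on $L$, so $L$ lies in a complex hyperplane defined over the conjugate of $\Lambda^*$. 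With that repair your proof matches the argument the paper leaves implicit.
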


As an example, consider the case of a power of an elliptic curve $E \cong \mathbb{C}/\Lambda$, where $\Lambda$ is a lattice of the form $\mathbb{Z}+\tau \mathbb{Z}$ for some complex number $\tau=a+ib$.

Identifying $\mathbb{C}$ with $\mathbb{R}^2$, the lattice $\Lambda$ is generated by the points $\vect{1}{0}$ and $\vect{a}{b}$. Clearly then a pair of generators of $\Lambda^*$ is given by $\vect{0}{\frac{1}{b}}$ and $\vect{1}{-\frac{a}{b}}$. These correspond to the complex numbers $-\frac{i}{b}$ and $1+\frac{ia}{b}$. Note that, multiplying both for $ib$, we obtain 1 and $-a+ib=-\o{\tau}$, which generate the conjugate of $\Lambda$. Hence, in the case of powers of an elliptic curve, the conjugate of the dual lattice is actually the original lattice up to multiplication by a scalar, and Lemma \ref{hyperdensecomplex} takes the following form.

\begin{coro}\label{elliptichyperdense}
	Let $E \cong \mathbb{C}/\Lambda$ be an elliptic curve, $L \leq \mathbb{C}^g$ a complex linear space, $\exp$ the exponential map of the abelian variety $E^g$. Then $\exp(L)$ is dense in $E^g$ if and only if $L$ is not contained in an hyperplane defined over $\Lambda$.
\end{coro}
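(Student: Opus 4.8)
The plan is to deduce the statement from Lemma~\ref{hyperdensecomplex}, applied to the rank-$2g$ product lattice $\Lambda^g \subseteq \mathbb{C}^g$ (note that $E^g \cong \mathbb{C}^g/\Lambda^g$), after checking that for this particular lattice the condition ``defined over the conjugate of the dual lattice'' describes exactly the complex hyperplanes defined over $\Lambda$.

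First I would record the elementary topological reduction: the exponential map $\exp \colon \mathbb{C}^g \to E^g = \mathbb{C}^g/\Lambda^g$ is a continuous, open, surjective homomorphism (a covering map), so $\exp(L)$ is dense in $E^g$ if and only if its full preimage $\exp^{-1}(\exp(L)) = L + \Lambda^g$ is dense in $\mathbb{C}^g$. One implication is that the continuous image of a dense set is dense; the other uses openness, since a nonempty open subset of $\mathbb{C}^g$ disjoint from $L + \Lambda^g$ would have nonempty open image disjoint from $\exp(L)$. By Lemma~\ref{hyperdensecomplex} this density holds precisely when $L$ lies in no complex hyperplane defined over the conjugate of $(\Lambda^g)^*$.

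Next I would identify that dual lattice. Because the Hermitian product on $\mathbb{C}^g$, and hence the real pairing $\langle\cdot,\cdot\rangle$ of Remark~\ref{scalar}, is the orthogonal direct sum of the one-dimensional pairings on the coordinate factors, the dual of a product lattice is the product of the duals: $(\Lambda^g)^* = (\Lambda^*)^g$, and likewise its conjugate is $(\overline{\Lambda^*})^g$. By the explicit $g = 1$ computation carried out just before the statement, writing $\tau = a + ib$ one has $ib\,\Lambda^* = \overline{\Lambda}$, so $\overline{\Lambda^*} = \tfrac{i}{b}\Lambda$ is $\Lambda$ up to a nonzero scalar; hence the conjugate of $(\Lambda^g)^*$ is $\tfrac{i}{b}\Lambda^g$.

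Finally I would observe that rescaling the lattice of admissible coefficient vectors by a fixed $\alpha \in \mathbb{C}^\times$ does not change which complex hyperplanes it defines, since $\{z : \sum_i c_i z_i = 0\} = \{z : \sum_i \alpha c_i z_i = 0\}$. Thus a complex hyperplane is defined over $\tfrac{i}{b}\Lambda^g$ if and only if it is defined over $\Lambda^g$, and chaining the three steps yields the corollary. I do not expect a real obstacle here: the only points needing care are fixing the meaning of ``defined over $\Lambda$'' (coefficient vectors in $\Lambda^g$, up to a global complex rescaling) and stating cleanly the reduction $(\Lambda^g)^* = (\Lambda^*)^g$ that imports the one-dimensional computation; everything else is routine.
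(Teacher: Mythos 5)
Your proposal is correct and follows essentially the same route as the paper: the paper's argument is exactly the explicit $g=1$ computation showing $ib\,\Lambda^* = \overline{\Lambda}$ (so the conjugate of the dual lattice is $\Lambda$ up to a nonzero scalar), combined with Lemma \ref{hyperdensecomplex} and the observation that rescaling coefficient vectors does not change the hyperplanes they define. You merely make explicit two steps the paper leaves implicit — the identification $(\Lambda^g)^* = (\Lambda^*)^g$ for the product lattice and the reduction of density of $\exp(L)$ to density of $L + \Lambda^g$ via the covering map — which is fine.
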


Recall that an elliptic curve $E$ is said to have \textit{Complex Multiplication} when its endomorphism ring is strictly larger than $\mathbb{Z}$, and that these are characterized as the curves whose period lattice has the form $\mathbb{Z}+\tau \mathbb{Z}$ for $\tau$ an imaginary quadratic number (see \cite{HS}, Example A.5.1.3). If $E$ has CM, linear spaces defined over the lattice are Lie algebras of abelian subvarieties of $E^g$, and the corollary can be seen as stating that all free complex hyperplanes have dense exponentials.

\begin{rem}
A similar description can be given by the well-known characterization of complex abelian varieties as complex tori which admit a \textit{polarization}, that is, an Hermitian form on $\mathbb{C}^g$ whose imaginary part takes integer values on $\Lambda \times \Lambda$. We chose to follow \cite{UY18} in using the dual lattice as it provides a more concrete description.
\end{rem}

\subsection{Intersections}

In this section $L$ will always be a translate of a linear subspace of $LA$.

\begin{defn}
	Let $L \times W \subseteq TA$ be an algebraic subvariety with $L \leq LA$ a linear subspace and $W \subseteq A$ an algebraic subvariety.
 
    We say $L \times W$ is \textit{rotund} if for every abelian subvariety $B$ and quotient map $\pi_{TB}:TA \twoheadrightarrow TA/TB$, $$\dim \pi_{TB}(L \times W) \geq \dim A/B.$$
\end{defn}

Note that in particular this implies that $\dim L + \dim W \geq \dim A$, by considering the trivial subgroup of $A$. If $A$ is simple, that is, it has no proper non-trivial abelian subvariety, this is the only condition.

The proof of the next two statements are similar to the proofs of \autocite[Lemma 6.1 and Proposition 6.2]{K19}, translated to the context of abelian varieties.

\begin{prop}\label{fibdim}
	Let $L \times W \subseteq TA$ be a rotund subvariety with $L \leq LA$ a linear subspace and $W \subseteq A$ an algebraic subvariety, such that $\dim L + \dim W = \dim A$, and $B$ an abelian subvariety of $A$. Then there is a non-empty Zariski-open subset $V_B$ of $L \times W$ such that if $\gamma \in V_B$, $$\dim((L \times W) \cap (\gamma \oplus TB)) \leq \dim B.$$
\end{prop}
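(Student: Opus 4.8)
The plan is to exploit the quotient map $f\colon A \twoheadrightarrow A/B$ and the fibre dimension theorem applied to the restriction of $Tf$ to $L \times W$. First I would set $n = \dim A$, $m = \dim B$, so that $\dim A/B = n - m$, and consider the morphism $\pi := Tf|_{L \times W}\colon L \times W \to Tf(L \times W)$. By rotundity, $\dim Tf(L \times W) = \dim Lf(L) + \dim f(W) \geq n - m$. Since $(L,W)$ is irreducible, $L \times W$ is irreducible and hence so is its image $\pi(L \times W)$; the fibre dimension theorem then gives a non-empty Zariski-open subset $U$ of $\pi(L \times W)$ over which the fibres of $\pi$ have dimension exactly $\dim(L \times W) - \dim \pi(L \times W)$, and the fibres over any point have dimension at least this value (upper semicontinuity working the other way — generic fibres are smallest). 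Using $\dim L + \dim W = n$ and the rotundity lower bound, a generic fibre of $\pi$ has dimension
\[
\dim(L \times W) - \dim \pi(L \times W) = n - \dim Tf(L \times W) \leq n - (n-m) = m.
\]

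Next I would translate "generic fibre of $\pi$" into the geometric statement about $\gamma \oplus TB$. The fibre of $Tf$ over a point $\tau \in TA/TB = T(A/B)$ is exactly the coset $\gamma \oplus TB$ for any $\gamma$ with $Tf(\gamma) = \tau$ (here using that $\ker Tf = TB$, since $f$ is a quotient of abelian varieties and $Tf\colon TA \to T(A/B)$ is the induced surjection with kernel the tangent bundle of $B$). So the fibre of $\pi$ over $\tau$ is precisely $(L \times W) \cap (\gamma \oplus TB)$. Therefore, taking $V_B := \pi^{-1}(U)$ — a non-empty Zariski-open subset of $L \times W$ because $\pi$ is a dominant morphism of varieties and $U$ is non-empty Zariski-open in the image — every $\gamma \in V_B$ lies in a fibre of dimension at most $m = \dim B$, which is the desired inequality $\dim((L \times W) \cap (\gamma \oplus TB)) \leq \dim B$.

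The one point requiring a little care — and the closest thing to an obstacle — is making sure the bound goes in the right direction and that the relevant set is open in $L \times W$ rather than merely in the image. The fibre dimension theorem tells us the \emph{minimal} fibre dimension is achieved on a dense open set and that \emph{all} fibres have dimension at least that minimum; we want an upper bound on a dense open set's worth of fibres, which is exactly the generic (minimal) value, and rotundity is precisely what forces this minimal value to be $\leq \dim B$. Pulling back the open set $U$ from the image to $L \times W$ via $\pi$ is legitimate since $\pi$ is a morphism, and $V_B$ is non-empty because $\pi$ surjects onto its image. I would also note in passing that irreducibility of $L \times W$ is used to apply the fibre dimension theorem cleanly (so that "generic fibre dimension" is well-defined); if one only assumed $L \times W$ reducible, one would run the argument on each component and intersect the resulting open sets, but the hypothesis lets us skip that. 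This matches the structure of Lemma 6.1 of \cite{K19}, with $Tf$ playing the role of the coordinate projection there.
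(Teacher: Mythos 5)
Your proposal is correct and follows essentially the same route as the paper: restrict the tangent map $Tf$ of the quotient $A \twoheadrightarrow A/B$ to $L \times W$, identify the sets $(L \times W) \cap (\gamma \oplus TB)$ with its fibres, and apply the fibre dimension theorem together with rotundity to bound the generic fibre dimension by $\dim B$. Your extra care in pulling the open set back from the image to obtain $V_B \subseteq L \times W$ is a welcome clarification of a detail the paper states only briefly.
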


\begin{proof}
	Let $\pi_{TB}:TA \twoheadrightarrow TA/TB$ be the quotient map. Consider the restriction of $\pi_{TB}$ to $L \times W$, so that for every $\gamma \in L \times W$, $((\gamma \oplus TB) \cap (L \times W))$ is a fibre of $\pi_{TB}$. Then the fibre dimension theorem implies that there is a Zariski-open subset $V_B$ of $L \times W$, such that for all $\gamma \in V_B$, 
    \begin{align*}
        \dim ((L \times W) \cap (\gamma \oplus TB))&=\dim \pi_{TB}^{-1}(\gamma) \\
                                                &=\dim L + \dim W - \dim (\pi_{TB}(L \times W))\\
                                                &\leq \dim L + \dim W - \dim A/B\\
                                                &=\dim A-\dim A+\dim B\\
                                                &=\dim B
    \end{align*}
where the inequality follows from rotundity.
\end{proof}	

\begin{thm}\label{interab}
Let $L \times W \subseteq LA \times A$ be a rotund subvariety with $L \leq LA$ a linear subspace and $W \subseteq A$ an algebraic subvariety, and suppose $L$ is not contained in any hyperplane defined over the conjugate of the dual lattice. Then $\exp(L) \cap W$ is dense in $W$ in the Euclidean topology.
\end{thm}

\begin{proof}
	By intersecting $L$ with generic hyperplanes if necessary, we assume that $\dim L + \dim W =\dim A$.
	
	Consider the function $\theta: L \times W \rightarrow A$, mapping $(l, w)$ to $w \ominus \exp(l)$. Let $a=(l_0, w_0)$ be a point in $L \times W$. Consider a small open neighbourhood $U$ of $a$ in $L \times W$ in the Euclidean topology, and the restriction of $\theta$ to $U$, denoted $\theta_{|U}$. 
	
	The set $\theta_{|U}^{-1}(\theta(a))$ is analytic, and thus we can assume it is connected by shrinking $U$ if necessary; if it has dimension 0, then $\theta_{|U}$ is locally finite-to-one and therefore open, by the Remmert open mapping theorem. So, under the assumption that $\dim\theta_{|U}^{-1}(\theta(a))=0$, $\theta(U)$ is an open subset of $A$, and as $\exp(L)$ is dense in $A$ by Lemma \ref{hyperdensecomplex}, $\theta(U) \cap \exp(L) \neq \varnothing$. But then there is a point $(l, w) \in U$ such that $w \ominus \exp(l)=\exp(l') \in \exp(L)$, and so $w =\exp(l +l') \in \exp(L)$.
	
	Therefore it remains to show that $\theta_{|U}^{-1}(\theta(a))$ can be taken to have dimension 0 for almost every $a \in L \times W$. Suppose then that for some $a$, the set $$S:=\{(l, w) \in L \times W \mid \theta(l, w)= \theta(a) \}$$ has positive dimension.
	
	For any $t \in LA$ such that $\exp(t)=\theta(a)$, consider the set $$S':=\{(l + t, w) \in LA \times A \mid (l, w) \in S \}.$$ For any point $b \in S'$, we have $$\theta(b)=w \ominus (\exp(l) \oplus \exp(t))=\theta(a)\ominus \theta(a)=0_A$$ so $S'$ is a positive dimensional component of the intersection $$((t + L) \times W) \cap \Gamma_\exp$$ where $\Gamma_\exp$ denotes the graph of $\exp$. 
	
	Recall that such a component is said to be \textit{atypical} when $\dim S > \dim L + \dim W -\dim A$. In this case, since $\dim L+\dim W=\dim A$ by assumption, every positive dimensional component is atypical.
	
	Consider the family $\mathcal{V}$ of subvarieties of $TA$ defined by $\mathcal{V}:=\{(s+L) \times W | s \in LA \}$. By \autocite[Theorem 8.1]{K06}, there is a finite set $\mathcal{B}$ of proper abelian subvarieties of $A$ such that for every subvariety $(s+L) \times W \in \mathcal{V}$, every positive dimensional component of the intersection $((s + L) \times W) \cap \Gamma_\exp$ is contained in a translate of $TB$ for some $B \in \mathcal{B}$. Hence there are $B \in \mathcal{B}$ and $\gamma \in TA$ such that $S' \subseteq \gamma \oplus TB$, and therefore $$S \subseteq (\gamma \ominus (t, 0_A)) \oplus TB.$$ Since $a \in S$, this can be rewritten as $S \subseteq a \oplus TB.$ Such a $B$ can be chosen to be minimal, and by the contrapositive of the same theorem we must have that $$\dim S= \dim ((a \oplus TB) \cap (L \times W)) - \dim B.$$
	
	Since $\dim S$ is positive, $a \in V_B$ (in the notation of Proposition \ref{fibdim}). Then, to avoid that $\dim S >0$, we just need to take $a \in \bigcap_{B \in \mathcal{B}}V_B$.
\end{proof}

\begin{rem}
    The exponential algebraic closedness conjecture for abelian varieties predicts that algebraic subvarieties of $TA$ which satisfy rotundity and another condition, known as \textit{freeness}, have a Zariski-dense subset of points of the form $(z,\exp(z))$. For varieties of the form $L \times W$, freeness requires that $L$ is not contained in a translate of the Lie algebra of an abelian subvariety of $A$ and that $W$ is not contained in a translate of an abelian subvariety of $A$; hence, our genericity condition on $L$ is stronger than the first part of the definition of freeness. However, we can also draw from it a stronger conclusion, namely Euclidean density of $\exp(L) \cap W$ in $W$. 
\end{rem}

Using Corollary \ref{elliptichyperdense}, we note that in the case in which $A$ is a power of an elliptic curve Theorem \ref{interab} the notion of freeness is actually implied by our assumption, and so the corresponding result of exponential algebraic closedness-type follows.

\begin{coro}
Let $E$ be an elliptic curve. Let $L \times W$ be a free and rotund subvariety of $LE^g \times E^g$, with $L \leq LE^g$ a linear subspace not defined over $\End(E) \otimes \mathbb{R}$ and $W \subseteq E^g$ an algebraic subvariety. 

Then $\exp(L) \cap W$ is dense in $W$ in the Euclidean topology.
\end{coro}

\section{The $j$ Function}\label{j}

\subsection{Background and Notation}

Let $\mathbb{H}$ denote the complex upper half plane, i.e$.$ the set $\{z \in \mathbb{C}\mid \Im(z) >0 \}$. The special linear group $\sl_2(\mathbb{R})$ acts on $\mathbb{H}$ by M\"obius transformations, that is, the matrix $g=\mat{a}{b}{c}{d}$ acts on $z \in \mathbb{H}$ by mapping it to $\frac{az+b}{cz+d}$.

For the rest of this subsection, let $G=\sl_2(\mathbb{R})$ and $\Gamma=\sl_2(\mathbb{Z})$.

\begin{rem}\label{action}
\begin{itemize}
    \item[(a)] The action of $G$ on $\mathbb{H}$ easily extends to an action on $\mathbb{H} \cup \mathbb{R} \cup \{\infty\}$. 
    \item[(b)] We will abuse notation and say that a matrix $g \in G$ is in $\gl_2^+(\mathbb{Q})$ if it is a scalar multiple of such a matrix, that is, if there is $\lambda \in \mathbb{R}_{>0}$ such that $\lambda g \in \gl_2(\mathbb{Q})^+$.
\end{itemize}
\end{rem}

Recall that an element of $G$ is called \textit{parabolic} if it is conjugate to a matrix of the form $\mat{1}{t}{0}{1}$ with $t \neq 0$, and that a \textit{cusp} for a discrete subgroup $\Gamma'$ of $G$ is a point $x \in \mathbb{R} \cup \{\infty\}$ such that $\gamma x=x$ for some parabolic $\gamma \in \Gamma'$. It is not hard to check that the set of cusps for $\Gamma$ is $\mathbb{Q} \cup \{ \infty\}$.

\begin{defn}
Let $f$ be a $\Gamma$-invariant function on $\mathbb{H}$, $\mathbb{D}$ denote the open unit disk. Then let $f^*:\mathbb{D} \setminus \{0\} \rightarrow \mathbb{C}$ be the function $f^*(\exp(2  \pi i z))=f(z)$ (this is well-defined because if $\exp(2\pi iz_1)=\exp(2\pi i z_2))$ then $z_1-z_2 \in \mathbb{Z}$, so $f(z_1)=f(z_2)$ by $\Gamma$-invariance.) 

We say $f$ is \textit{meromorphic} (resp$.$ \textit{has a pole of order $n$}) \textit{at the cusp} if $f^*$ is meromorphic (resp$.$ has a pole of order $n$) at 0.
\end{defn}

We recall the definition of the $j$ function, following \cite{Mil}, Part I, Section 4.

\begin{defn}
The modular invariant $j$ is the unique function $j:\mathbb{H} \rightarrow \mathbb{C}$ that is holomorphic, $\Gamma$-invariant, with a simple pole at the cusp and such that $j(i)=1728$ and $j(e^{\frac{i\pi}{3}})=0$. 
\end{defn}

As in the previous section, unless explicitly stated otherwise, all algebraic varieties will be irreducible.

By \textit{algebraic subvariety of $\mathbb{H}^n$} we mean the intersection with $\mathbb{H}^n$ of an algebraic subvariety of $\mathbb{C}^n$. Similarly, by an \textit{algebraic subvariety of $\mathbb{H}^n \times \mathbb{C}^n$} we mean the intersection with $\mathbb{H}^n \times \mathbb{C}^n$ of an algebraic subvariety of $\mathbb{C}^{2n}$.

\begin{defn}
Let $L$ be an algebraic subvariety of $\mathbb{H}^n$. We say $L$ is a \textit{M\"obius subvariety of $\mathbb{H}^n$} if it can be defined by conditions of the form $z_k=gz_i$ for $g \in G, i,k \leq n$ and $z_i=c$ for $c \in \mathbb{H},i \leq n $.
\end{defn}

For example the variety $\{(z_1,z_2) \in \mathbb{H}^2 |z_2=z_1+\sqrt{2} \}$ is a M\"obius subvariety of $\mathbb{H}^n$, defined by $z_2=gz_1$ where $g=\mat{1}{\sqrt{2}}{0}{1}$.

\begin{defn}
A M\"obius subvariety $L$ of $\mathbb{H}^n$ is \textit{weakly special} if it can be defined by using just conditions specifying constant coordinates and equations of the form $z_i=hz_k$ where $h \in \gl_2(\mathbb{Q})^+$.
\end{defn}

Weakly special varieties are M\"obius varieties of particular interest, as they are an example of bialgebraic varieties in the sense of \cite{KUY}.

\begin{fact}[\cite{UY}, Theorem 1.2]
Let $V$ be an algebraic subvariety of $\mathbb{H}^n$. Then $j(V)$ is an algebraic subvariety of $\mathbb{C}^n$ if and only if $V$ is weakly special.
\end{fact}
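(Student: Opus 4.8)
The plan is to prove the two implications separately: the forward one (weakly special $\Rightarrow$ algebraic image) is soft, while the reverse one rests entirely on the modular Ax--Lindemann--Weierstrass theorem.

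For the forward direction I would argue as follows. Since $\gl_2(\mathbb{Q})^+$ is a group, the relation on indices ``$z_i = h z_k$ for some $h \in \gl_2(\mathbb{Q})^+$'' is an equivalence relation, so after reordering coordinates $V$ is a product of factors, each being either a single coordinate fixed to a constant $\tau \in \mathbb{H}$, or a block $(z_{i_0},\dots,z_{i_m})$ with $z_{i_a} = h_a z_{i_0}$, $h_a \in \gl_2(\mathbb{Q})^+$. On a constant factor $j$ takes the single value $j(\tau)$, a point; on a block I would use the modular polynomials: for $h \in \gl_2(\mathbb{Q})^+$ of level $N$ one has $\Phi_N(j(z), j(hz)) \equiv 0$ on $\mathbb{H}$, the curve $\{\Phi_N = 0\}$ is the Zariski closure of $\{(j(z), j(hz)) : z \in \mathbb{H}\}$, and it is finite over each coordinate axis. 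Together with surjectivity of $j\colon \mathbb{H} \to \mathbb{C}$ this identifies the image of a block with the Zariski-closed common zero set of the relevant modular polynomials, so $j(V)$ is a product of algebraic varieties, hence algebraic.

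For the converse I may assume $V$ irreducible (this is implicit in calling $V$ weakly special). Set $W := j(V)$, algebraic by hypothesis. As $j$ acting coordinatewise is a covering map $\mathbb{H}^n \to \mathbb{C}^n$, hence a local biholomorphism, $\dim V = \dim W$ and $\dim j^{-1}(W) = \dim W$. Now $V \subseteq j^{-1}(W)$; I choose an irreducible algebraic subvariety $Z$ of $\mathbb{H}^n$ with $V \subseteq Z \subseteq j^{-1}(W)$ of maximal possible dimension. Then $Z$ is a maximal irreducible algebraic subvariety of $j^{-1}(W)$ (a proper inclusion of irreducibles strictly increases dimension), so by the modular Ax--Lindemann--Weierstrass theorem $Z$ is weakly special. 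Finally $\dim V \leq \dim Z \leq \dim j^{-1}(W) = \dim W = \dim V$ forces $\dim V = \dim Z$, and since $V \subseteq Z$ with both irreducible, $V = Z$. Hence $V$ is weakly special.

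\textbf{Main obstacle.} Everything hinges on the modular Ax--Lindemann--Weierstrass theorem, due to Pila, which is the deep ingredient behind \cite{UY}: the assertion that maximal irreducible algebraic subvarieties of $j^{-1}(W)$ are weakly special is not accessible by elementary means. Its proof restricts $j$ to a fundamental domain for $\sl_2(\mathbb{Z})$, where $j$ becomes definable in the o-minimal structure $\mathbb{R}_{\mathrm{an},\mathrm{exp}}$; one then studies the definable set of $\gamma \in \sl_2(\mathbb{R})^n$ with $\gamma Z \subseteq j^{-1}(W)$, bounds its algebraic points via the Pila--Wilkie counting theorem, and contrasts this with a lower bound on the number of such $\gamma$ coming from the modular correspondences $W$ must contain (because it is algebraic), forcing $Z$ to be stabilized by an arithmetically large subgroup and hence to be weakly special. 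I would take this as a black box; only degenerate cases (e.g.\ $V$ a point, or a single geodesic) can be handled by hand.
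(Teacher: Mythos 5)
The paper itself gives no proof of this Fact: it is imported verbatim from \cite{UY}, where it is proved by monodromy-theoretic arguments (semisimplicity of monodromy in the style of Deligne and Andr\'e), with no point counting. Your derivation is therefore a genuinely different route: you deduce it from Pila's modular Ax--Lindemann--Weierstrass theorem, taken as a black box. Granting that input, your reduction is sound: choosing an irreducible algebraic $Z$ with $V\subseteq Z\subseteq j^{-1}(W)$ of maximal dimension does make $Z$ maximal for inclusion among irreducible algebraic subvarieties of $j^{-1}(W)$, ALW makes it weakly special, and the count $\dim V\le\dim Z\le\dim j^{-1}(W)=\dim W=\dim V$ forces $V=Z$. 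Your remark that the statement implicitly concerns irreducible $V$ is also correct (a union of two points of $\mathbb{H}$ has algebraic image but is not weakly special). The trade-off between the two routes: \cite{UY} obtains the bi-algebraic characterization without o-minimality and predates ALW, while your argument leans on the later, stronger theorem but fits the o-minimal toolkit this paper uses elsewhere (Pila--Tsimerman Ax--Schanuel, Peterzil--Starchenko definability).

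Two steps need repair, though neither is fatal. First, $j\colon\mathbb{H}^n\to\mathbb{C}^n$ is not a covering map and not a local biholomorphism: $j'$ vanishes on the orbits of $i$ and $e^{i\pi/3}$. What you actually use is only that $j$ is surjective, open away from a small set, and has discrete fibres ($\sl_2(\mathbb{Z})^n$-orbits), and that already gives $\dim j(V)=\dim V$ and $\dim j^{-1}(W)=\dim W$; phrase it that way. Second, in the forward direction the claim that the image of a block is the full common zero set of the relevant modular polynomials is both stronger than needed and not justified by your sketch (it amounts to a strong-approximation statement for $\sl_2(\mathbb{Z}/N)$); what must be established is merely that the image is Zariski closed, since a constructible or analytic image contained in a curve need not be closed. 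A clean fix: for a block $z\mapsto(j(z),j(h_1z),\dots,j(h_mz))$ with $h_a\in\gl_2(\mathbb{Q})^+$, all coordinates are modular functions for the finite-index subgroup $\Delta=\sl_2(\mathbb{Z})\cap\bigcap_a h_a^{-1}\sl_2(\mathbb{Z})h_a$, holomorphic on $\mathbb{H}$ with poles at every cusp, so the map factors through the affine modular curve for $\Delta$ and extends to its compactification with all cusps sent to infinity in every coordinate; hence the image is a closed irreducible affine curve (equivalently, an irreducible component of your common zero locus). With these adjustments the proposal is complete.
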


We call the image of a weakly special subvariety of $\mathbb{H}^n$ under $j$ a \textit{weakly special subvariety of $\mathbb{C}^n$}.

The corresponding algebraic relations in the codomain are given by the \textit{modular polynomials}.

\begin{defn}\label{modpol}
For each natural number $N$, the $N$-th \textit{modular polynomial} is the polynomial $\Phi_N \in \mathbb{Z}[X,Y]$ such that for all $w_1,w_2 \in \mathbb{C}$, $\Phi_N(w_1,w_2)=0$ holds if and only if there are $z \in \mathbb{H}$ and $g \in \gl_2(\mathbb{Q})^+$ with coprime integer entries and determinant $N$ such that $w_1=j(z)$, $w_2=j(gz)$. 
\end{defn}

\subsection{Density of Images of M\"obius Subvarieties}\label{rat}

\begin{defn}
A M\"obius subvariety $L$ of $\mathbb{H}^n$ is \textit{free} if it is not contained in any weakly special subvariety, i.e$.$, if no coordinate is constant on $L$ and for no $i,k \leq n$ there is $h \in \gl_2(\mathbb{Q})^+$ such that $z_k=hz_i$ for all $(z_1,\dots,z_n) \in L$.
\end{defn}

\begin{prop}\label{dense}
Let $L \subseteq \mathbb{H}^n$ be a free M\"obius subvariety. Then $j(L)$ is dense in the Euclidean topology in $\mathbb{C}^n$.
\end{prop}

Proposition \ref{dense} can be regarded as a standard application of Ratner's theorem on unipotent flows (see \cite{Rat, Mor}). 

We will actually need a different statement, which gives density not just of $j(L)$ but of the $\sl_2(\mathbb{Z})^n$-orbit of a subgroup of $\sl_2(\mathbb{R})^n$ obtained from $L$ and is a consequence of work of Ullmo and Yafaev on applications of Ratner's theorem to flows on Shimura varieties (see \cite{UY18b}). 

\begin{lem}\label{densityinsl2}
    Given $g_2,\dots,g_n \in \sl_2(\mathbb{R})$, let $F=\{(h,g_2hg_2^{-1},\dots,g_nhg_n^{-1}) \in \sl_2(\mathbb{R})^n \mid h \in \sl_2(\mathbb{R})\}$. 
    
    If the M\"obius subvariety $L=\{(z,g_2z,\dots,g_nz) \in \mathbb{H}^n \mid z \in \mathbb{H}^n\}$ of $\mathbb{H}^n$ is free, then $\sl_2(\mathbb{Z})^n \cdot F$ is dense in $\sl_2(\mathbb{R})^n$. 
\end{lem}

\begin{proof}
    By \autocite[Proposition 5.2]{UY18b}. Note that freeness of $L$ implies that the Mumford-Tate group of $F^+(\mathbb{R})$ is $\sl_2(\mathbb{R})$.
\end{proof}

\begin{proof}[Proof of Proposition \ref{dense}]
    It is sufficient to prove this for $L \subseteq \mathbb{H}^n$ a one-dimensional M\"obius subvariety, so assume $L=\{(z,g_2z,\dots,g_nz) \in \mathbb{H}^n \mid z \in \mathbb{H}\}$ for some $g_2\dots,g_n$.
    
    Let $ \varnothing \neq U \subseteq \mathbb{C}^g$ be an open set. Let $w=(w_1,\dots,w_n) \in U$. Since $j$ is surjective, there is $z=(z_1,\dots,z_n) \in \mathbb{H}^n$ such that $j(z)=w$. Since the action of $\sl_2(\mathbb{R})$ on $\mathbb{H}$ is transitive, there exist $h_2,\dots,h_n \in \sl_2(\mathbb{R})$ such that $z_i=h_ig_iz_1$ for $i=2,\dots,n$. By continuity of the action, there is an open subset $U' \subseteq \sl_2(\mathbb{R})^{n}$ containing $(\mathbb{I}_2,h_2,\dots,h_n)$ such that if $(h_1',h_2',\dots,h_n') \in U'$, then $$(j(h_1'z),j(h_2'g_2z), \dots,j(h_n'g_nz)) \in U.$$

    By Lemma \ref{densityinsl2}, there are $h\in \sl_2(\mathbb{R})$ and $(\gamma_1,\dots,\gamma_n) \in \sl_2(\mathbb{Z})^n$ such that $$(\gamma_1h, \gamma_2g_2hg_2^{-1}, \dots, \gamma_ng_nhg_n^{-1}) \in U'.$$ Hence, by $\sl_2(\mathbb{Z})$-invariance of $j$, $$(j(hz), j(g_2hz), \dots, j(g_nhz))=(j(\gamma_1hz), j(\gamma_2g_2hz), \dots, j(\gamma_ng_nhz)) \in U.$$
\end{proof}

\subsection{Intersections}

In this subsection we prove the existence of intersections between images of free M\"obius subvarieties and appropriate algebraic subvarieties of $\mathbb{C}^n$. 

\begin{lem}\label{sincos}
Let $z_1, z_2 \in \mathbb{H}$, with $z_l=x_l+iy_l$ for $l=1,2$. Then for any $c,d \in \mathbb{R}$ such that $|cz_1+d|^2=\frac{y_1}{y_2}$, there are $a,b \in \mathbb{R}$ such that $\mat{a}{b}{c}{d}z_1=z_2$.
\end{lem}

\begin{proof}
Direct calculations show that $\mat{\sqrt{y}}{\frac{x}{\sqrt{y}}}{0}{\frac{1}{\sqrt{y}}} i= x+iy$, for all $x$ and $y$, and that $gi=i$ for all $g \in \so_2(\mathbb{R})$.

Therefore, for all $\theta \in [0,2\pi)$ we have that $$\mat{\sqrt{y_2}}{\frac{x_2}{\sqrt{y_2}}}{0}{\frac{1}{\sqrt{y_2}}}  \mat{\cos \theta}{\sin \theta}{-\sin \theta}{\cos \theta} \mat{\frac{1}{\sqrt{y_1}}}{-\frac{x_1}{\sqrt{y_1}}}{0}{\sqrt{y_1}}z_1=z_2$$ and that the lower entries of the product matrix are $-\frac{\cos\theta}{\sqrt{y_1y_2}}$  and $x_1 \frac{\cos \theta}{\sqrt{y_1y_2}}+(\sin \theta) \sqrt{\frac{y_1}{y_2}}$. Therefore, $$cz_1+d=\sqrt{\frac{y_1}{y_2}}(\sin\theta-i\cos\theta).$$
\end{proof}

This together with density of images of M\"obius subvarieties is enough to prove the existence of intersections in the case $\dim L=\codim W=1$.

\begin{prop}\label{d=1}
	Let $L \times W$ be an algbraic subvariety of $\mathbb{H}^n \times \mathbb{C}^n$ such that $L$ is a free M\"obius subvariety of $\mathbb{H}^n$ of dimension $1$, $W \subseteq \mathbb{C}^n$ is algebraic and none of its coordinates is identically $0$ or $1728$, and $\dim L + \dim W \geq n$. Then $W$ has a dense subset of points of $j(L)$. 
\end{prop}

We will obtain this as a corollary of a stronger result, which does not require $W$ to be an algebraic variety.

\begin{lem}\label{holo}
	Let $g_2,\dots,g_n \in \sl_2(\mathbb{R})$; $V$ an open subset of $\mathbb{C}^n$, $f:V \rightarrow \mathbb{C}$ a holomorphic function. Denote by $W$ the zero locus of $f$; assume that $W$ has a regular point $(w_1,\dots,w_n)$ such that $w_i \notin \{0,1728\}$ for all $i=1,\dots,n$. Then $W$ has a dense subset of points of the form $(j(z_1), j(g_2z_1), \dots, j(g_nz_1))$.
\end{lem}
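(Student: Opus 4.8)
The plan is to reduce the statement to Lemma \ref{dense} via a dimension count on the space of M\"obius transformations, together with an application of the Remmert open mapping theorem analogous to the one used in Theorem \ref{interab}. First I would set up coordinates on the relevant space of parameters: by Proposition \ref{sincos}, the matrices $g \in \sl_2(\mathbb{R})$ sending a fixed $z_1 \in \mathbb{H}$ to a point $z \in \mathbb{H}$ are parametrised by the pair $(z_1,z)$ together with the stabiliser direction, so the map $\Phi \colon \mathbb{H} \times \sl_2(\mathbb{R})^{n-1} \to \mathbb{H}^n$ sending $(z_1, h_2, \dots, h_n)$ to $(z_1, h_2 z_1, \dots, h_n z_1)$ is an open, in fact submersive, map onto an open subset of $\mathbb{H}^n$ of full real dimension $2n$. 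Composing with $j^n \colon \mathbb{H}^n \to \mathbb{C}^n$, which is an open holomorphic surjection, gives an open map $\Psi \colon \mathbb{H}\times\sl_2(\mathbb{R})^{n-1} \to \mathbb{C}^n$ whose image is all of $\mathbb{C}^n$. The key point, provided by Lemma \ref{dense} applied to the free M\"obius curve $L$ defined by $z_i = g_i z_1$, is that $j(L) = \{(j(z_1), j(g_2 z_1), \dots, j(g_n z_1)) : z_1 \in \mathbb{H}\}$ is dense in $\mathbb{C}^n$.

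Now the actual argument: fix a point $w_0 = (j(z_1^0), j(g_2 z_1^0), \dots, j(g_n z_1^0)) \in W$ lying in the open set $V$, which exists as soon as $W \cap V \neq \varnothing$ — and if $W$ is empty there is nothing to prove, while if $W \cap V = \varnothing$ the zero locus of $f$ relative to $V$ is empty and again the statement is vacuous. Around such a $w_0$ I consider the holomorphic function $f$ on a small polydisc $P \subseteq V$ centred at $w_0$. The idea is to exhibit, for any prescribed small Euclidean neighbourhood $N$ of an arbitrary target point $w^* \in W$, a zero of $f$ of the required shape inside $N$. By density of $j(L)$ we may assume $w^*$ itself is arbitrarily close to being of the form $(j(z_1), j(g_2 z_1), \dots)$; more precisely, fix any $w^* \in W$ and any neighbourhood $N \ni w^*$, and choose $z_1^* \in \mathbb{H}$ with $(j(z_1^*), j(g_2 z_1^*), \dots, j(g_n z_1^*)) \in N$, which is possible by Lemma \ref{dense}. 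The remaining task is to perturb $z_1^*$ within $\mathbb{H}$ (and, if needed, perturb the $g_i$ within $\sl_2(\mathbb{R})$, but the statement fixes the $g_i$, so I will perturb only $z_1$) to land exactly on $W$.

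To make that perturbation work I would use the open mapping theorem in the following form, mirroring the proof of Theorem \ref{interab}: consider the holomorphic map $z_1 \mapsto f(j(z_1), j(g_2 z_1), \dots, j(g_n z_1))$ on a small disc $D \subseteq \mathbb{H}$ around $z_1^*$. If this map is not identically zero, then by the one–variable open mapping theorem its image is an open neighbourhood of $f(\text{base point})$, and since that image must then contain $0$ — because we can arrange the base value to be small and $0$ to lie in the image by choosing $D$ appropriately, or more robustly, because a non-constant holomorphic function has $0$ in the interior of the image of any disc on which it takes a value near $0$ — we obtain a point $z_1 \in D$ with $f(j(z_1), j(g_2 z_1), \dots) = 0$, i.e. a point of $W$ of the desired form inside $N$. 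If instead the map is identically zero on $D$, then $j(L)$ already meets $W$ on a whole curve near $w^*$ and density of $j(L)$ finishes the job directly. Assembling these local arguments over a countable dense family of target points $w^* \in W$ gives the density of $\{(j(z_1), j(g_2 z_1), \dots, j(g_n z_1))\} \cap W$ in $W$.

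\medskip

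\textbf{The main obstacle.} The delicate point is the dimension bookkeeping that makes the open mapping step legitimate: one needs the hypothesis $\dim L + \dim W \ge n$ (implicit here since $W$ is a hypersurface zero locus, so $\dim W = n-1$ and $\dim L = 1$) to guarantee that the composed map $z_1 \mapsto f(\dots)$ is generically non-vanishing rather than forced to vanish for trivial parallelism reasons — this is the $j$-analogue of the rotundity argument via the fibre dimension theorem in Proposition \ref{fibdim}. Making precise why the map is non-constant for a suitable choice of the base point $z_1^*$ (as opposed to all choices giving the identically-zero map, which would happen if $W \supseteq j(\mathbb{H}^n)$-image curves in a degenerate way) is where the freeness of $L$ has to be used a second time, beyond its use in Lemma \ref{dense}, and is the step I expect to require the most care.
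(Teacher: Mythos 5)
There is a genuine gap at the perturbation step. After using Lemma \ref{dense} to pick $z_1^*$ with $(j(z_1^*), j(g_2 z_1^*),\dots,j(g_n z_1^*))$ in a small neighbourhood $N$ of $w^* \in W$, you try to produce a zero of the single function $F(z)=f(j(z),j(g_2z),\dots,j(g_nz))$ near $z_1^*$ via the open mapping theorem. But openness of the image of a small disc only says the image is an open set around the small value $F(z_1^*)$; it need not contain $0$ unless you have a quantitative lower bound on $|F'|$ near $z_1^*$ relative to $|F(z_1^*)|$ (together with a second-derivative bound), and you have no control over where the density argument places $z_1^*$: the derivative of $F$ there may be of the same order as, or much smaller than, $F(z_1^*)$, and $F$ may simply be zero-free on the disc. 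The claim that ``a non-constant holomorphic function has $0$ in the interior of the image of any disc on which it takes a value near $0$'' is false, and this is exactly the transversality issue you flag at the end without resolving. (A smaller point: the opening assertion that a point $w_0=(j(z_1^0),j(g_2z_1^0),\dots,j(g_nz_1^0))\in W$ exists as soon as $W\cap V\neq\varnothing$ is circular --- the existence of such a point is precisely what the lemma asserts.)

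The paper's proof gets around this by approximating in the matrix variables rather than perturbing $z_1$ along the fixed parametrisation, which is the option you explicitly set aside. One fixes a regular point $\o{w}\in W$ with no coordinate equal to $0$ or $1728$, writes $\o{w}=(j(z_1),j(h_2z_1),\dots,j(h_nz_1))$ for auxiliary matrices $h_i\in\sl_2(\mathbb{R})$ (not the given $g_i$), and arranges $\frac{d}{dz}F(z_1)\neq 0$ for $F(z)=f(j(z),j(h_2z),\dots,j(h_nz))$, replacing some $h_l$ by another matrix with $h_l'z_1=h_lz_1$ via Proposition \ref{sincos} if necessary. Since $j$ is $\sl_2(\mathbb{Z})$-invariant, tuples of the form $(\gamma_2 g_2\gamma,\dots,\gamma_n g_n\gamma)$ still yield points of $j(L)$, and Lemma \ref{multiratner} makes such tuples dense in $\sl_2(\mathbb{R})^{n-1}$, so one chooses a sequence of them converging to $(h_2,\dots,h_n)$; the corresponding functions $F_i$ satisfy $F_i(z_1)\to 0$ and $F_i'(z_1)\to F'(z_1)\neq 0$ with uniformly bounded second derivatives on a compact neighbourhood, and the Newton--Kantorovich theorem then produces genuine zeros of $F_i$ near $z_1$, i.e.\ points of $j(L)\cap W$ near $\o{w}$. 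This matrix-level approximation is the missing quantitative input; perturbing $z_1$ alone cannot supply it.
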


\begin{proof}
	Let $L=\{(z_1,\dots, z_n) \mid z_i=g_iz_1 \textnormal{ for } i=2,\dots, n \}$. 
	
	Let $(w_1,\dots,w_n) \in W$ be a regular point such that no coordinate of $w$ is 0 or 1728. Then find a point $(z_1,\dots,z_n) \in \mathbb{H}^n$ such that $j(z_i)=w_i$ for $i=1,\dots,n$, and $h_2,\dots, h_n \in \sl_2(\mathbb{R})$ such that $z_i=h_ig_iz_1$ for $i=2,\dots,n$. Then consider the function $G:\mathbb{H} \rightarrow \mathbb{C}^n$, mapping $z \in \mathbb{H}^n$ to $f(j(z),j(h_2g_2z),\dots,j(h_ng_nz))$. By construction $G(z_1)=0$. Let $U \subseteq \mathbb{H}$ be a neighbourhood of $z_1$.
	
	By Lemma \ref{densityinsl2}, there is a sequence $\{\o{g}^i \}_{i \in \mathbb{N}}$, converging to $(\mathbb{I}_2,h_2,\dots,h_n)$, such that each $\o{g}^i$ is a tuple of the form $$(\gamma_1^{i}k_i,\gamma_2^ig_2k_ig_2^{-1},\dots,\gamma_n^ig_nk_ig_n^{-1})$$ for some $k_i \in \sl_2(\mathbb{R})$. 
	
	Then consider the sequence of functions $\{G_i\}_{i \in \mathbb{N}}$, where each $G_i:U \rightarrow \mathbb{C}$ is defined by $G_i(z)=f(j(\gamma_1^ik_iz),j(\gamma_2^ig_2k_iz),\dots,j(\gamma_n^ig_nk_iz))$. It is then clear that $$\lim_{i \in \mathbb{N}} G_i(z_1)=G(z_1)=0.$$
	
	If $\frac{d}{dz} G(z_1) \neq 0$, $z_1$ is an isolated zero of $G$, and therefore we may assume (restricting $U$ if necessary) that it is the only zero of $G$ in $\o{U}$, and then by the argument principle $\int_{\partial U} \frac{G'(z)}{G(z)}dz=2\pi i$. Then for sufficiently large $i$ we have $\int_{\partial U} \frac{G_i'(z)}{G_i(z)}dz=2\pi i$, and therefore $G_i$ has a zero $z_1' \in U$: hence $$f(j(\gamma_1^i k_iz_1'),j(\gamma_2^ig_2k_iz_1'),\dots, j(\gamma_n^ig_nk_iz_1'))=0,$$ that is, $$(j(k_iz_1'),j(g_2k_iz_1'),\dots,j(g_nk_iz_1')) \in j(L) \cap W.$$

    Hence it remains to prove the following claim.
 
	\textbf{Claim}: Without loss of generality we may assume $\frac{d}{dz} G(z_1) \neq 0$.
	
	\textbf{Proof of Claim}: Suppose $\frac{d}{dz} G(z_1) = 0$. Writing $j(\o{hg}z_1)$ for the tuple $$(j(z_1),j(h_2g_2z_1),\dots,j(h_ng_nz_1)),$$ we have that $$\frac{d}{dz}G(z_1)=$$ $$=\frac{\partial f}{\partial Y_1}(j(\o{hg}z_1))j'(z_1)+ \frac{\partial f}{\partial Y_2}(j(\o{hg}z_1))(j(h_2z_1))'+ \dots + \frac{\partial f}{\partial Y_n}(j(\o{hg}z_1))(j(h_nz_1))'$$ where if $h_ig_i=\left( \begin{matrix} a & b \\ c & d \end{matrix} \right)$, $(j(h_ig_iz_1))'=\frac{j'(h_ig_iz_1)}{(cz_1+d)^2}$.
 
     Since $(w_1,\dots,w_n)=j(\o{hg}z_1)$ is a regular point in $W$ and none of the $w_i$'s is 0 or 1728, the summands in $\frac{d}{dz} F(z_1)$ are not all zero. Then in particular at least two of them are not zero, and so there is $l>1$ such that $\frac{\partial f}{\partial Y_l}(j(\o{hg}z_l))\frac{j'(h_lg_lz_1)}{(cz_1+d)^2} \neq 0$. Then it is enough to change the matrix $h_l$, which we are free to do by Lemma \ref{sincos}: there is a matrix $k$ such that $kg_lz_1=h_lg_lz_1$, but $(j(kg_lz_1))' \neq (j(h_lg_lz_1))'$.
\end{proof}

\begin{proof}[Proof of Proposition \ref{d=1}]
	If $\dim W=n$ then it follows directly from Proposition \ref{dense}. If $\dim W=n-1$, apply Lemma \ref{holo} in the case where $V=\mathbb{C}^n$ and $f$ is a polynomial.
\end{proof}

To extend this to higher dimension we need the notions of broadness and freeness in the codomain, introduced by Aslanyan in \cite{A}. 

\begin{defn}\label{broad}
	Let $L \times W$ be an algebraic subvariety of $\mathbb{H}^n \times \mathbb{C}^n$ such that $L$ is a M\"obius subvariety of $\mathbb{H}^n$ and $W$ is an algebraic subvariety of $\mathbb{C}^n$.
	
	For every $I=(i_1,\dots,i_k)$ where $1 \leq i_1 < \dots < i_k \leq n$ are natural numbers, let $\pi_I$ denote the projection $$\pi_I:\mathbb{H}^n \times \mathbb{C}^n \rightarrow \mathbb{H}^k \times \mathbb{C}^k$$ which maps $(z_1,\dots,z_n,w_1,\dots,w_n)$ to $(z_{i_1}, \dots z_{i_k}, w_{i_1} \dots, w_{i_k})$.
	
	We say $L \times W$ is \textit{broad} if for every $I$, $$\dim \pi_I(L \times W) \geq |I|.$$
\end{defn}

\begin{defn}\label{free}
The algebraic variety $W \subseteq \mathbb{C}^n$ is \textit{free} if it is not contained in a weakly special subvariety of $\mathbb{C}^n$.

The algebraic subvariety $L \times W$ of $\mathbb{H}^n \times \mathbb{C}^n$ is \textit{free} if both $L$ and $W$ are free. If a subvariety of $\mathbb{H}^n \times \mathbb{C}^n$ is both broad and free, we will say it is \textit{free broad.}
\end{defn}

The \textit{Existential Closedness Conjecture for $j$} (see \autocite[Conjecture 1.2]{AK}) predicts that all algebraic subvarieties $V \subseteq \mathbb{H}^n \times \mathbb{C}^n$ which satisfy freeness and broadness (which have more general definitions than the ones we use here) intersect the graph of $j$.

\begin{lem}\label{zaropen}
Let $L \times W \subseteq \mathbb{H}^n \times \mathbb{C}^n$ be a broad subvariety, where $L \subseteq \mathbb{H}^n$ is a M\"obius subvariety and $W \subseteq \mathbb{C}^n$ is algebraic. 

There is a non-empty Zariski open subset $W^\circ$ of $W$ such that for any analytic irreducible component $C$ of $(L \times W^\circ) \cap \Gamma_j$ we have $$\dim C=\dim L + \dim W-n$$ (i.e$.$, $C$ is a typical component of the intersection).
\end{lem}

\begin{proof}
For $I$ an ordered tuple of elements of $\{1,\dots, n\}$, let $\pi_I:\mathbb{C}^n \rightarrow \mathbb{C}^{n_I}$ denote the corresponding projection. Denote, for $w \in \pi_I(W)$, by $W_w$ the set $$\{w' \in W \mid \pi_I(w')=w\}.$$ By the fibre dimension theorem, for any $I$ there is a Zariski-closed proper subset $W_I \subseteq W$ such that for every $w \in W$, if $\dim (W_w) > \dim W - \dim \pi_I(W)$ then $w \in W_I$. 

Consider the family $$\{W_w \mid w \in \pi_I(W)\}$$ of subvarieties of $W$. By Aslanyan's weak modular Zilber-Pink theorem for parametric families \autocite[Theorem 7.9]{A20}, there is a Zariski-closed subset $S_I$ of $W$ such that any atypical intersection between some variety $W_w$ and a weakly special subvariety of $\mathbb{C}^n$ with no constant coordinates is contained in $S_I$. Thus, define $$W^\circ:=W \setminus \bigcup_{I \subseteq [n]} (S_I \cup W_I).$$

Suppose now that $U$ is a bounded open subset of $L$, that $C$ is an analytic irreducible component of the intersection $j(U) \cap W^\circ$, and that $$\dim C > \dim L + \dim W -n \geq 0.$$ 

By Pila and Tsimerman's Ax-Schanuel theorem for $j$ \autocite[Theorem 1.1]{PT} we have that $C$ must be contained in an atypical intersection between $W$ and a weakly special subvariety of $\mathbb{C}^n$. Since $C \subseteq W^\circ$, and $S_I \subseteq W\setminus W^\circ$, we have that $C$ has some constant coordinates. Let then $$I:=\{i \in \{1,\dots,n\} \mid \textnormal{the } i\textnormal{-th coordinate is constant on }C \}.$$

There is some $a \in \mathbb{H}^{n_I}$ such that $C \subseteq W_{j(a)}$. Denote by $L_a$ and $U_a$ the fibres over $a$ for the restrictions of the projection $\pi_I$ to $L$ and $U$, and let $I_0$ be the complement of the set $I$ in $[n]$. The projection $\pi_{I_0}$ has zero-dimensional fibres on $L_{a}$ and on $W_{j(a)}$, so it preserves dimensions. The component $\pi_{I_0}(C)$ of the intersection $$j(\pi_{I_0}(U_a)) \cap \pi_{I_0}(W_{j(a)})$$ then has no constant coordinates, and its points do not identically satisfy modular relations because they are projections of points in $C$, so it is a typical component of the intersection. Thus, 
\begin{align*}
    \dim L_a + \dim W_{j(a)} -(n-n_I)&=\dim \pi_{I_0}(L)+\dim \pi_{I_0}(W)-(n-n_I)\\
                                     &=\dim \pi_{I_0}(C) \\
                                     &=\dim C\\
                                     &> \dim L + \dim W -n.
\end{align*}  
Hence $$\dim W_{j(a)} > \dim L - \dim L_{a} + \dim W -n_I.$$

Since $C \subseteq W^\circ$ and $L \times W$ is broad, $$\dim W_{j(a)} = \dim W - \dim \pi_I(W) \leq \dim W - n_I +\dim \pi_I(L).$$

Comparing these, we obtain
\begin{align*}
    \dim W - n_I + \dim \pi_I(L) &> \dim L - \dim L_{a} + \dim W -n_I \\
    \dim L_{a} + \dim \pi_{I}(L) &> \dim L
\end{align*}
which cannot hold as $L$ is a M\"obius subvariety. Therefore the component $C$ is typical.
\end{proof}

We can now prove the main result. 

\begin{thm}\label{Main}
	Let $L\subseteq \mathbb{H}^n$ be a M\"obius subvariety, $W \subseteq \mathbb{C}^n$ an algebraic subvariety such that the subvariety $L \times W$ is free broad. Then $j(L) \cap W$ is dense in $W$ in the Euclidean topology.
\end{thm}

\begin{proof}
	By induction on $d=\dim L$. The case $d=1$ is Proposition \ref{d=1}, so suppose the theorem holds for $d$, $\dim L=d+1$, and $\dim W\geq n-d-1$. By intersecting $W$ with generic hyperplanes so that the dimension of the projections stays sufficiently big, we may assume without loss of generality that $\dim W=n-d-1$.
	
	By definition of M\"obius subvariety, up to reordering the coordinates we can write $L$ as a product $L_1 \times \dots \times L_{d+1}$, where each $L_i$ is a one-dimensional M\"obius subvariety. There are numbers $n_1$, $n_2$ such that $n_1+n_2=n$, $L':=L_1 \times \dots \times L_d$ is a $d$-dimensional M\"obius subvariety of $\mathbb{H}^{n_1}$ and $L_{d+1}$ is a 1-dimensional M\"obius subvariety of $\mathbb{H}^{n_2}$; let $\pi_{i}:\mathbb{C}^n \rightarrow\mathbb{C}^{n_{i}}$, for $i=1,2$ denote the corresponding projections on the codomain. By broadness, $\pi_1(W)$ has dimension at least $n_1-d$, and hence by the inductive hypothesis it contains a dense subset of points of $j(L')$; now there are two cases.
	
	If $\dim(\pi_{1}(W))=n_{1}-d$, then by the fibre dimension theorem any point $w_{1} \in \pi_1(W) \cap j(L')$ has a fibre $W_{w_1}$ of dimension at least $\dim W-(n_{1}-d)=n_2-1$. Therefore by Lemma \ref{holo} $\pi_2(W_{w_{1}})$ has a dense subset of points of $j(L_{d+1})$, and we are done.
	
	If $\dim (\pi_{1}(W))=n_{1}-d+k$ for some $k>0$, then for a generic point $w \in \pi_{1}(W)$ the fibre $W_{w}$ has dimension $n_2-1-k$.
	
	As the subvariety $L' \times \pi_{1}(W)$ is broad, by the inductive hypothesis and intersecting $\pi_{1}(W)$ with generic hyperplanes if necessary we may assume $L' \times \pi_1(W)$ intersects the graph of $j$ in an analytic set of dimension $k' \geq k$. Then, denoting by $\Gamma_j$ the graph of $j$, $$\pi_1^{-1}((L' \times \pi_1(W)) \cap \Gamma_j)=((L' \times \mathbb{C}^{n_2}) \times W) \cap \Gamma_j$$ has dimension $k'+n_2-1-k \geq n_2 -1$.
	
	Now let $U$ be a bounded open subset of $L'$, so that $(j(U) \times \mathbb{C}^{n_2}) \cap W$ is an analytic set in $j(U) \times \mathbb{C}^{n_2}$, and let $\pi_{\res}$ denote the restriction of the second projection $\pi_2$ to the set $(j(U) \times \mathbb{C}^{n_2}) \cap W$.
	
	Suppose that $\pi_{\res}$ has finite fibres: then it is proper. To prove this we use the following fact.
	
	\begin{fact}[\cite{Ch}, p.3.1]\label{propproj}
		Let $X$ and $Y$ be locally compact, Hausdorff topological spaces, $G \subseteq X$ and $D \subseteq Y$ subsets with $\o{G}$ compact. If $A$ is a closed subset of $G \times D$, the projection $\pi_2:A \rightarrow D$ is proper if and only if $A$ has no limit points in $\partial G \times D$.
	\end{fact}
	
	In our setting, $X=\mathbb{C}^{n_1}$, $Y=D=\mathbb{C}^{n_2}$, $G=j(U)$, and $A=j(U) \times \mathbb{C}^{n_2} \cap W$; let $w_2$ be a point in the image of $\pi_{\res}$ with finite fibre. Given that $\pi_{\res}$ has finite fibres, there is a ball $B \subseteq j(U)$ such that $W_{w_2} \cap j(U) \times \mathbb{C}^{n_2}$ does not intersect the set $\partial B \times \{w_2\}$. As $W \cap j(U) \times \mathbb{C}^{n_2}$ is closed in $j(U) \times \mathbb{C}^{n_2}$ and $\pi_{\res}$ is finite and hence open, this property actually holds in a neighbourhood of $w_2$; therefore, by Fact \ref{propproj}, by taking $U$ sufficiently small we can make sure that the map $\pi_\res$ is proper. 
	
	So, under the assumption that $\pi_{\res}$ has finite fibres, we can apply the proper mapping theorem (see p.5.8 in \cite{Ch}), which states that the image of $\pi_{\res}$ is an analytic set. Since we proved $j(U) \times \mathbb{C}^{n_2} \cap W$ has dimension at least $n_2-1$, the image of $\pi_{\res}$ is either an open subset of $\mathbb{C}^{n_2}$, or an analytic set in $\mathbb{C}^{n_2}$ of codimension $1$; either way, using density of $j(L_{d+1})$ in the first case and Lemma \ref{holo} otherwise (by freeness none of the coordinates is identically $0$ or $1728$), we can find a point $w \in W \cap j(U) \times \mathbb{C}^{n_2}$ such that $\pi_2(w) \in j(L_{d+1})$; therefore, $w \in j(L) \cap W$, as we wanted.
	
	Therefore, it remains to show that if we choose the bounded open subset $U$ of $L'$ appropriately, then the map $\pi_{\res}$ has finite fibres. Using once again broadness and the fibre dimension theorem, we find a point $w_2 \in \pi_2(W)$ with fibre $W_{w_2}:=\{w \in W \mid \pi_2(w)=w_2 \}$ such that 
    \begin{align*}
        \dim W_{w_2} + \dim (j(L') \times \mathbb{C}^{n_2} ) &= \dim W - \dim \pi_2(W) + \dim j'(L) + n_2 \\
        & \leq (n_1-d-1)-(n_2-1) +d+n_2 \\
        &= n.
    \end{align*}
    Therefore any positive dimensional intersection between $j(L') \times \mathbb{C}^{n_2}\times W_{w_2}$ and $\Gamma_j$ is atypical; by Lemma \ref{zaropen}, it suffices to make sure that $j(U) \times \mathbb{C}^{n_2} \cap W$ is contained in the Zariski-open non-empty subset $W^\circ$ of $W$ to avoid this. The set $W^\circ$ contains points of $j(L') \times \mathbb{C}^{n_2}$ because $\pi_{1}(W) \cap j(L')$ is dense in $\pi_{1}(W)$.
\end{proof}

\section{Derivatives of the $j$ Function}

We conclude with some remarks on extensions of the results in the previous section to the first derivative of the $j$ function. Recall that $j,j'$ and $j''$ are algebraically independent, and therefore many results in this area, for example, the Ax-Schanuel theorem \autocite[Theorem 1.2]{PT}, tend to consider them simultaneously. In turn, this leads to the formulation of questions about the existence of points of the form $(z,j(z),j'(z),j''(z))$ on algebraic subvarieties $V \subseteq \mathbb{H}^n \times \mathbb{C}^{3n}$ which satisfy versions of freeness and broadness (see \autocite[Conjecture 1.6]{AK}).

The methods in this paper seem to be insufficient to address the problems of systems of equations which involve $j,j'$ and $j''$; however, they can be employed to obtain some partial results on $j'$.

First of all we remark that while $j$ is a modular function, $j'$ and $j''$ are not: $j'$ is a modular form of weight 2, and the transformation law for $j''$ under the action of $\sl_2(\mathbb{Z})$ is more complicated. 

A partial solution to that is to work in jet spaces, as the second jet of the $j$ function is indeed invariant under the action of $\sl_2(\mathbb{Z})$ on $J_2\mathbb{H}$. However, this leads to other issues: the action of $\sl_2(\mathbb{R})$ on $J_2\mathbb{H}$ is not transitive, and points in the image of $J_2j$ do not have the form $(j(z), j'(z), j''(z))$, but rather $(j(z), j'(z)r, j''(z)r^2+j'(z)s)$ for some complex numbers $r$ and $s$. 

In this section we show one kind of result that can still be obtained, namely that a free hypersurface in the complex numbers intersects the image of the first jet of M\"obius subvariety of $\mathbb{H}^n$ under the first jet of $j$. 

We note that as in the previous sections we only consider irreducible algebraic varieties.

\subsection{Background and Notation}

We recall some general facts about jet spaces, and about the action of $\sl_2(\mathbb{R})$ on $J_2\mathbb{H}$.

\begin{defn}
Let $M$ be a complex analytic manifold. The \textit{$k$-th jet space} of $M$ for a natural number $k$ is the space of equivalence classes of holomorphic maps $f:U \rightarrow M$ from a small neighbourhood $U$ of $0 \in \mathbb{C}$ into $M$, identifying maps that are equal up to order $k$.
\end{defn}

We will only be interested in second jets, so we assume $k=2$ in the following.

An element in $J_2\mathbb{H}$ is a triple $(z,r,s)$, where $z \in \mathbb{H}$, $r,s \in \mathbb{C}$, that corresponds to the equivalence class containing the function $f:U \rightarrow \mathbb{H}$ taking $w$ to $z+rw+s\frac{w^2}{2}$, where $U$ is a neighbourhood of 0. 

Jets are a functorial construction: given a map $\phi:M \rightarrow N$, there is an induced map $J_k\phi:J_kM \rightarrow J_kN$, that takes the equivalence class of the function $f:U \rightarrow M$ to that of $\phi \circ f$. Therefore, if for a fixed $g=\mat{a}{b}{c}{d} \in \sl_2(\mathbb{R})$ we consider the map $g \cdot (-)  :\mathbb{H} \rightarrow \mathbb{H}$, we can see what the action induced on $J_2\mathbb{H}$ is: $$g \cdot (z,r,s)=\left( \frac{az+b}{cz+d}, \frac{r}{(cz+d)^2}, \frac{s}{(cz+d)^2} - \frac{2cr^2}{(cz+d)^3} \right)$$

Similarly we can consider the second jet of the $j$ function itself, which is obtained as:

$$J_2j(z,r_1,r_2)=\left(j(z), j'(z)r, j''(z)r^2+j'(z)s \right)$$ so that in particular for example $J_2j(z,1,0)=(j(z),j'(z),j''(z))$ for any $z \in \mathbb{H}$.

Using the transformation laws for $j'$ and $j''$, i.e$.$, for $\gamma=\mat{a}{b}{c}{d}$, $$j'(\gamma z)=(cz+d)^2 j'(z)$$ and $$j''(\gamma z)=(cz+d)^4j''(z)+2c(cz+d)^3j'(z)$$ one can prove that $J_2j(\gamma \cdot (z,r,s))=J_2j(z,r,s)$.

\subsection{Intersections for $j'$}

Let $T_1j:J_1\mathbb{H}^n \rightarrow \mathbb{C}^n$ denote the composition $\pi \circ J_1 j$, where $\pi:J_1 \mathbb{C}^n \cong \mathbb{C}^{2n} \rightarrow \mathbb{C}^2$ is the projection on the third and fourth coordinate. Hence, $$T_1(j)(z_1,z_2,r_1,r_2)=\left(j'(z_1)r_1, j'(z_2)r_2 \right).$$

In this subsection we prove the following statement, a $j'$-algebraic-closedness type result which can be obtained with the methods of this paper.

\begin{thm}\label{Main'}
    Let $L \subseteq \mathbb{H}^n$ be a free M\"obius subvariety of dimension 1, $W \subseteq \mathbb{C}^n$ a free hypersurface.

    Then $T_1j(J_1L) \cap W$ is dense in $W$ in the Euclidean topology.
\end{thm}

\begin{lem}\label{onedim}
Let $(w_1,w_2) \in \mathbb{C}^2$, both non-zero. Then there exist $z_1 \in \mathbb{H}$ and $h \in \sl_2(\mathbb{R})$, $h \notin \gl_2(\mathbb{Q})^+$, such that $\frac{j'(hz_1)}{(cz_1+d)^2}=w_2$.
\end{lem}

\begin{proof}
Fix $(w_1,w_2) \in \mathbb{C}^2$ and $z_1$ such that $j'(z_1)=w_1$ (which exists by surjectivity of $j'$). Using the Fourier series expansion of $j$, $$j(z)=\sum_{n=-1}^\infty c_ne^{2\pi niz} $$ where all $c_n$'s are positive integers (see for example \cite{Rad}) we see that $j'$ takes imaginary values on $i\mathbb{R}_{\geq 1}$ and that $$\lim_{x \in \mathbb{R},x \rightarrow +\infty} |j'(ix)|=\infty.$$

Consider the function $\phi:\mathbb{H} \rightarrow \mathbb{C}$ defined by $z  \mapsto j'(z) \Im(z)$. A direct computation shows that the Jacobian of $\phi$ as a function $\mathbb{R} \times \mathbb{R}_{>0} \rightarrow \mathbb{R}^2$ is nonsingular on points of $\{0\} \times \mathbb{R}_{\geq 1}$. Hence, as $j'(i)=0$, for every $x \in \mathbb{R}_{\geq 0}$ there is a $z \in i\mathbb{R}_{\geq 1}$ such that $|\phi(z)|=x$ and $\phi$ is open around $z$.

Hence we may find some $y \in \mathbb{R}_{\geq 0}$ such that $|j'(iy)\Im(iy)|=|w_2\Im(z_1)|$. As $z \mapsto |\phi(z)|$ is a real analytic mapping from a space of dimension 2 into a space of dimension 1, the set $$\{z \in \mathbb{H} \mid |\phi(z)|=|w_2\Im(z_1)| \}$$ contains a real analytic 1-dimensional neighbourhood of $iy$, and so in particular a point $z \notin \gl_2(\mathbb{Q})^+ \cdot z_1$. Let $h=\mat{a}{b}{c}{d} \in \sl_2(\mathbb{R})$ satisfy $hz_1=z$. Then we have 
\begin{align*}
    |j'(z)\Im(z)|&=|w_2 \Im(z_1)| \\
    |j'(hz_1) \Im(hz_1)| &= |w_2 \Im(z_1)|\\ 
    \left| \frac{\Im(hz_1)}{\Im(z_1)} \right| &= \left| \frac{w_2}{j'(hz_1)} \right|
\end{align*}
It is easy to check that $\left| \frac{\Im(hz_1)}{\Im(z_1)} \right| =\frac{1}{|cz+d|^2}$, hence
\begin{align*}
    \left| \frac{w_2}{j'(hz_1)} \right| &=  \frac{1}{|cz_1+d|^2}.
\end{align*}
By Lemma \ref{sincos}, we can choose $h$ so that 
\begin{align*}
    \frac{w_2}{j'(hz_1)} &=  \frac{1}{(cz_1+d)^2}.
\end{align*} 
\end{proof}

\begin{prop}\label{zardensej'}
    Let $L \subseteq \mathbb{H}^n$ be a free M\"obius subvariety of dimension 1. Then $T_1j(J_1L)$ is Zariski-dense in $\mathbb{C}^n$.
\end{prop}

\begin{proof}
    Let $W \subseteq \mathbb{C}^n$ be a hypersurface. If $T_1j(J_1) \subseteq W$, then $J_1j(J_1L) \subseteq \mathbb{C}^n$, that is, the intersection of the variety $J_1L \times (\mathbb{C}^n \times W)$ with the graph of $J_1j$ has dimension 2. This is an atypical intersection and so by the Ax-Schanuel theorem with derivatives the projection to the first $n$ coordinates of the codomain needs to be contained in a weakly special subvariety of $\mathbb{C}^n$, contradicting freeness of $L$.
\end{proof}

We recall the \textit{cocycle relation for automorphy factors}, as it will be used in the proof: if $g=\mat{a_g}{b_g}{c_g}{d_g}, h=\mat{a_h}{b_h}{c_h}{d_h} \in \sl_2(\mathbb{R})$, and $gh=\mat{a_{gh}}{b_{gh}}{c_{gh}}{d_{gh}}$, then for all $z \in \mathbb{H}$ we have $$c_{gh}z+d_{gh}=(c_ghz+d_g)(c_hz+d_h).$$ This can be easily verified directly.

\begin{proof}[Proof of Theorem \ref{Main'}]
    For notational simplicity we deal with the case $n=2$; the general case is completely analogous. In this proof we adopt the following convention: if we denote a matrix in $\sl_2(\mathbb{R})$ by a letter, say $h$, we use the notation $\mat{a_h}{b_h}{c_h}{d_h}$ for its entries.

Let $f \in \mathbb{C}[X_1,X_2]$ be the polynomial defining $W$, and let $(w_1, w_2) \in W$ be a regular point. By Lemma \ref{onedim}, there are $z_1 \in \mathbb{H}$ and $h \in \sl_2(\mathbb{R})$, $h \notin \gl_2(\mathbb{Q})^+$, such that $j'(z_1)=w_1$ and $\frac{j'(hgz_1)}{(c_{hg}z+d_{hg})^2}=w_2$, so we have that $$h \cdot \left( z_1, 1 \right) = \left( hz_1, \frac{1}{(cz_1+d)^2} \right) $$ and $$Tj_1\left(z_1, hz_1, 1, \frac{1}{(cz_1+d)^2} \right)= (w_1,w_2).$$ Hence, if we denote by $L_h$ the M\"obius subvariety of $\mathbb{H}^2$ defined as $$L_h:=\{(z,hz) \in \mathbb{H}^2 \mid z \in \mathbb{H} \}$$ we get that $(w_1,w_2) \in W \cap T_1j(J_1L_h)$. Let $U$ be a neighbourhood of $z_1$ in $\mathbb{H}$, and let $F:U \rightarrow \mathbb{C}$ denote the function defined by $$z \mapsto f\left( T_1j \left(z,ghz, 1, \frac{1}{(c_{gh}z+d_{gh})^2} \right) \right)=f\left( j'(z), \frac{j'(hgz)}{(c_{hg}z+d_{hg})^2} \right).$$ By Proposition \ref{zardensej'}, $F$ is a non-constant holomorphic function, so $\im(F)$ is an open neighbourhood of 0.

By Lemma \ref{densityinsl2} there is a sequence $\{(\gamma_1^i  k_i , \gamma_2^i g k_i g^{-1})\}_{i \in \mathbb{N}}$, where each $\gamma_1^i, \gamma_2^i \in \sl_2(\mathbb{Z})$ and each $k_i \in \sl_2(\mathbb{R})$, which converges to $(\mathbb{I}_2, h).$ Consider the sequence of functions $\{F_i\}_{i \in \mathbb{N}}$, where each $F_i:U \rightarrow \mathbb{C}$ is defined by 
\begin{align*}
    z &\mapsto  f \left( T_1 j \left( \gamma_1^i k_i z, \gamma_2^i g k_i z, \frac{1}{(c_{\gamma_1^i k_i}z+d_{\gamma_1^i k_i})^2}, \frac{1}{(c_{\gamma_2^i gk_i}z+d_{\gamma_2^i gk_i})^2}  \right) \right)\\
    &= f\left( \frac{j'(\gamma_1^i k_i z)}{(c_{\gamma_1^i k_i} z+d_{\gamma_1^i k_i})^2}, \frac{j'(\gamma_2^i gk_i z)}{(c_{\gamma_2^i gk_i} z+d_{\gamma_2^i gk_i})^2} \right).
\end{align*}
The $F_i$'s converge uniformly to $F$. As 0 is in the interior of the image of $F$, for sufficiently large $i$ we have that $0 \in \im(F_i)$, so there is $z \in U$ such that $$f\left( \frac{j'(\gamma_1^i k_i z)}{(c_{\gamma_1^i k_i} z+d_{\gamma_1^i k_i})^2}, \frac{j'(\gamma_2^i gk_i z)}{(c_{\gamma_2^i gk_i} z+d_{\gamma_2^i gk_i})^2} \right)=0.$$

By the cocycle relation we have that for each $i \in \mathbb{N}$, $$\left( \gamma_1^i k_iz, \gamma_2^i gk_i z, \frac{1}{(c_{\gamma_1^i k_i}z + d_{\gamma_1^i k_i})^2}, \frac{1}{(c_{\gamma_2^ig k_i}z + d_{\gamma_2^ig k_i})^2} \right)=$$ $$=(\gamma_1^i, \gamma_2^i) \cdot \left( k_i z, gk_iz,\frac{1}{(c_{k_i}z+d_{k_i})^2}, \frac{1}{(c_{gk_i}z+d_{gk_i})^2} \right)$$ and therefore, by $\sl_2(\mathbb{Z})$-invariance of $J_1j$ (and hence of $T_1j$), $$f \left(T_1j \left( k_i z, gk_iz,\frac{1}{(c_{k_i}z+d_{k_i})^2}, \frac{1}{(c_{gk_i}z+d_{gk_i})^2}\right) \right)=0.$$  Hence, $((j(k_iz))', (j(gk_iz))')=\left( \frac{j'(k_iz)}{(c_{k_i}z_1+d_{gk_i})^2}, \frac{j'(gk_iz)}{(c_{gk_i}z_1+d_{gki})^2} \right) \in W \cap T_1j(J_1L)$.
\end{proof}

\printbibliography

\end{document}